\newtheorem{thm}{Theorem}[section]
\newtheorem{lem}[thm]{Lemma}
\newtheorem{prop}[thm]{Proposition}
\newtheorem{coro}[thm]{Corollary}
\theoremstyle{definition}				
\newcommand{\R}{\mathbb{R}}
\newcommand{\Z}{\mathbb{Z}}
\title{\normalfont\spacedallcaps{Randomisation in the Josephus Problem}} 
\author{\spacedlowsmallcaps{Faustin ADICEAM\textsuperscript{1}, Steven ROBERTSON\textsuperscript{2},}\\ \spacedlowsmallcaps{Victor SHIRANDAMI\textsuperscript{2} \& Ioannis TSOKANOS\textsuperscript{3}}} 
\date{} 
\begin{document}

\maketitle


\renewcommand{\sectionmark}[1]{\markright{\spacedlowsmallcaps{#1}}} 
\lehead{\mbox{\llap{\small\thepage\kern1em\color{halfgray} \vline}\color{halfgray}\hspace{0.5em}\rightmark\hfil}} 

\pagestyle{scrheadings} 

\begin{flushright}
\textit{To Angela James}
\end{flushright} 

\vspace{2mm}

\begin{abstract}
\noindent The Josephus problem is a well--studied elimination problem consisting in determining the position of the survivor after repeated applications of a deterministic rule removing one person at a time from a given group. \\

\noindent  A natural probabilistic variant of this process is introduced in this paper. More precisely, in this variant, the survivor is determined after performing a succession of Bernoulli trials with parameter $p$ designating at each time which person is removed. When the number of participants tends to infinity, the main result characterises the limit distribution of the position of the survivor  with an increasing degree of precision as the parameter approaches the unbiased case $p=1/2$. Then, the convergence rate to the position of the survivor is obtained in the form of a Central Limit Theorem. \\

\noindent  A number of other variants of the suggested probabilistic elimination process are also considered. They each admit a specific limit behavior which, in most cases, is stated in the form of an  open problem.  
\end{abstract}

\tableofcontents

\let\thefootnote\relax\footnotetext{\textsuperscript{1} {Laboratoire d’analyse et de mathématiques appliquées (LAMA), Université Paris-Est Créteil, Créteil, France,} \texttt{faustin.adiceam@u-pec.fr}}
\let\thefootnote\relax\footnotetext{\textsuperscript{2}Department of Mathematics, The University of Manchester, United-Kingdom, \texttt{ steven.robertson@manchester.ac.uk} \& \texttt{ victor.shirandami@manchester.ac.uk}}
\let\thefootnote\relax\footnotetext{\textsuperscript{3}Departmento de Mathem\'atica, Instituto de Biociências, Letras e Ciências Exatas, Universidade Estadual Paulista, São José do Rio Preto, São Paulo, Brasil, \texttt{ioannis.tsokanos@unesp.br}. \\}

\section{Introduction}

The Josephus problem  originates from Flavius Josephus' (c.~AD 37 -- c.~100) recollection of the siege of Yodfat (AD 67) in his \emph{Wars of the Jews}. He relates how he was left as the only survivor among  40 besieged fellow soldiers in an elimination process aiming at not surrendering to the Romans~:

\begin{quotation}
\noindent « Since we all are resolved to die, let us rely on fate to decide the order in which we must kill each other~: the first of us that fortune will designate shall fall under a stab from the next one, and thus fate will successively mark the victims and the murderers, exempting us from attempting on our lives with our own hands. For it would be unfair if, after the others had killed themselves, there were someone who could change his feelings and would want to survive\textsuperscript{4}\footnote{\textsuperscript{4}\emph{In} \textgreek{Ἱστορία Ἰουδαϊκοῦ πολέμου πρὸς Ῥωμαίους}, III, 8, \S 7. The translation is from the authors.} ».
\end{quotation}

\noindent This story gave birth to an intriguing mathematical problem already studied by Euler in the XVIII$^{th}$ century and relying on the following interpretation of the elimination process described by Flavius Josephus~: 

\paragraph{\textbf{Rule for the Josephus Elimination  Process.}} \emph{Assume that $N\ge 1$ persons enumerated from $0$ to $N-1$ stand in a circle.  Starting from the first one, each person eliminates his right neighbour and passes the knife onto the next person still alive on his right.}\\

\noindent The problem is then to determine the position $a_N\in \left\llbracket 0, N-1\right\rrbracket$ of the survivor, where given reals $x\le y$, the shorthand notation $\left\llbracket x, y\right\rrbracket$ is used for $\left[ x, y\right]\cap\Z$. Setting for convenience $b_N=a_N+1$ when $N\ge 1$, it is well--known, see~\cite[\S~1.3]{GRK} and also the very nice Numberphile episode~\cite{numbphile}, that 
\begin{itemize}
\item[(a)] the sequence $\left(b_N\right)_{N\ge 1}$ satisfies the recurrence formula \begin{equation}\label{recjos}
b_N\;=\; 2b_{\lfloor N/2\rfloor }- (-1)^N\qquad \textrm{with}\qquad b_1=1.
\end{equation}
This relation implies that the sequence is 2--regular in the language of~\cite{AS}.
\item[(b)] the sequence $\left(b_N\right)_{N\ge 1}$ admits the closed-form expression $$b_N\;=\; 2l+1 \qquad \textrm{when}\qquad N\;=\; 2^m+l \qquad \textrm{with}\qquad 0\le l<2^m.$$
\item[(c)] the sequence $\left(b_N\right)_{N\ge 1}$ can be expressed in binary base as $$b_N\;=\; \overline{c_{k-1}\cdots c_0c_k}^2\qquad \textrm{when} \qquad N\;=\;\overline{c_{k}c_{k-1}\cdots c_0}^2.$$
\end{itemize}

\noindent  It does not seem, however, to have been noticed that the recurrence relation~\eqref{recjos} yields an elegant closed-form formula for the corresponding generating series, namely
\begin{equation*}\label{detpowser}
\sum_{N=0}^{\infty} b_Nx^N\;=\; 1+\frac{1}{1-x}\cdot\left(\frac{3x-1}{1-x}-\sum_{N=1}^{\infty}2^N x^{2^N}\right).
\end{equation*}
As this will not be needed in what follows, the verification of this identity is left to the reader.\\

\noindent Assume throughout that $N\ge 3$ is an integer and that $p\in [0,1]$ is a real parameter. The present paper is concerned with a probabilistic variant of the above (deterministic) Josephus problem. In order to state it, it is convenient to rescale the position of the participants by requiring that they should all stand on a unit circle.

\paragraph{\textbf{Rule for the Probabilistic Elimination Process.}}\emph{Let there be $N$ participants enumerated from $0$ to $N-1$, standing on a unit circle with a regular spacing between them and labelled counterclockwise. The $0^{\textrm{th}}$  participant holds first the knife~: with probability $p$ he eliminates Participant 1 (standing on his right) and with probability $1-p$ Participant $N-1$ (standing on his left) before passing the knife onto the next participant still alive in the direction of the stabbing (namely, 2 or N-2, respectively). The next participant holding the knife then stabs on the same direction as in the previous step with probability $p$ and, with probability $1-p$, in the opposite direction.} \\


\noindent The case $p=1$ recovers the classical deterministic process whereas, when $0<p<1$, the above rule can be seen as an elimination process where the survivor is determined by a succession of Bernoulli trials with parameter $p$. To put it differently, the survivor is then determined after performing a succession  coin--tossings biaised according to the value of $p$.\\



\noindent The problem is now to determine the limit distribution of the survivor as the number $N$ of participants tends to infinity.  To this end, define $g_N(n,p)$ as the probability of survival of the person labelled $n\in \left\llbracket 0, N-1\right\rrbracket$ in a round with $N$ participants when the elimination process follows Bernoulli trials with parameter $p\in (0,1)$. Let then $X_N^{(p)}$ denote a random variable on the space $\R/\Z$ (identified throughout with the unit circle) standing for the normalized position of the survivor; that is, the probability that $X_N^{(p)}=n/N$ is given by  $g_N(n,p)$.

\begin{thm}\label{mainthm}
Assume that $p\in (0,1)$. Then, the sequence of random variables $\left(X_N^{(p)}\right)_{N\ge 3}$ converges in law to a random variable supported in $\R/\Z$. The distribution of the limit random variable can be described with an increasing degree of precision as the parameter $p$ approaches the unbiased case $p=1/2$ as follows~:
\begin{itemize}
\item[1.] in the general case when $p\in (0,1)$, the convergence in law $X_N^{(p)}\overset{\mathcal{L}}{\underset{N\rightarrow\infty}{\longrightarrow}} X^{(p)}$ is verified, where $X^{(p)}$  is a Bernoulli random variable taking values in the set $\{0, 1/2\}$;

\item[2.] in the case that $p$ lies in the middle interval $(1/3, 2/3)$, the convergence in probability\textsuperscript{5}\footnote{\textsuperscript{5}Recall that convergence in probability is equivalent to convergence in law in the case that the limit is a constant.} $X_N^{(p)}\underset{N\rightarrow\infty}{\longrightarrow} 1/2$ holds;

\item[3.] in the unbiased case where $p=1/2$, a rate of convergence to the constant $1/2$ can be obtained in the form of a Central Limit Theorem. Indeed, setting for simplicity $X_N=X_N^{(1/2)}$ and assuming that the random variables $\left(X_N\right)_{N\ge 1}$ are successively drawn independently,  the following convergence in law to the standard normal distribution $\mathcal{N}(0,1)$ is then satisfied~: 
\begin{equation*}
\frac{1}{B_L}\cdot\sum_{N=1}^{L}\left(X_N-\frac{1}{2}\right)\;\overset{\mathcal{L}}{\underset{L\rightarrow\infty}{\longrightarrow}}\; \mathcal{N}(0,1).
\end{equation*}
Here, $$B_L\;=\; \sqrt{\sum_{N=3}^L \mathbb{V}_N(X_N)},$$where $\mathbb{V}_N(X_N)$ is the variance of the random variable $X_N$. This quantity can be asymptotically estimated as
\begin{equation*}
B_L\;\asymp\; \sqrt{\ln L}.
\end{equation*}
\end{itemize}
\end{thm}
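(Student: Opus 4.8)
The plan is to reduce the entire statement to an explicit random recursion for the normalized survivor position, then to treat parts~1--2 by Fourier analysis on the circle $\R/\Z$ and part~3 by a triangular--array central limit theorem. First I would condition on the first stabbing: once Participant~$0$ has removed Participant~$1$ (probability~$p$) or Participant~$N-1$ (probability~$1-p$) and passed the knife, relabelling the $N-1$ survivors cyclically from the new knife--holder turns the remaining game into a \emph{fresh} copy of the $(N-1)$--participant process, and the two sub--cases are exchanged by the reflection $n\mapsto -n$ of $\R/\Z$. Unwinding this identity down to the trivial two--person game yields
\[
X_N^{(p)}\ \overset{\mathcal L}{=}\ \frac{Y_N}{N},\qquad Y_2=0,\qquad Y_{k+1}\ =\ \varepsilon_{k+1}\,(Y_k+2)\bmod (k+1)\quad(k\ge 2),
\]
where $\varepsilon_3,\varepsilon_4,\dots$ are independent with $\mathbb{P}(\varepsilon_k=1)=p=1-\mathbb{P}(\varepsilon_k=-1)$. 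In the rescaled variable $Z_k:=Y_k/k\in\R/\Z$ this is the time--inhomogeneous random dynamical system $Z_{k+1}=\varepsilon_{k+1}\bigl(\tfrac{k}{k+1}Z_k+\tfrac{2}{k+1}\bigr)$, whose two features drive the whole proof: when $\varepsilon_{k+1}=-1$ the noise is exactly the reflection $z\mapsto -z$ of $\R/\Z$, whose only fixed points are $0$ and $1/2$; and the drift $z\mapsto z+\tfrac{2-z}{k+1}$, although of vanishing size, has divergent total magnitude ($\asymp\ln k$).

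\noindent\textbf{Part 1.} Since $\R/\Z$ is compact, it is enough to prove that for every fixed $t\in\Z$ the Fourier coefficient $\varphi_N(t):=\mathbb{E}\bigl[e^{2\pi i t X_N^{(p)}}\bigr]=\sum_n g_N(n,p)\,e^{2\pi i tn/N}$ converges as $N\to\infty$. Because $t$ is an integer the reduction modulo $N$ disappears and the recursion gives
\[
\varphi_N(t)=p\,e^{4\pi i t/N}\,\mathbb{E}\bigl[e^{2\pi i t Y_{N-1}/N}\bigr]+(1-p)\,e^{-4\pi i t/N}\,\mathbb{E}\bigl[e^{-2\pi i t Y_{N-1}/N}\bigr].
\]
Iterating down to $Y_2=0$ expresses $\varphi_N(t)$, \emph{up to correction terms coming from the modular wrap--arounds}, as $\mathbb{E}\bigl[\exp\bigl(\tfrac{4\pi i t}{N}\sum_{m=3}^{N}\varepsilon_m\varepsilon_{m+1}\cdots\varepsilon_N\bigr)\bigr]$. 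The goal is then to prove that $\varphi_N(2t)\to 1$ for every $t$ --- equivalently, $X_N^{(p)}$ concentrates on the reflection--fixed pair $\{0,1/2\}$, which is to be expected since on the logarithmic time scale $\tau=\ln k$ the reflections become infinitely more frequent than the $O(1)$--per--unit--$\tau$ drift --- and that $\varphi_N(t)$ for odd $t$ converges to $1-2q(p)$, where $q(p)\in[0,1]$ is the weight of the ensuing Bernoulli law on $\{0,1/2\}$.

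\noindent\textbf{Parts 2 and 3.} For part~2 one must upgrade $\{0,1/2\}$ to $\{1/2\}$: that the escaping mass $\rho(p):=\lim_N\mathbb{P}\bigl(X_N^{(p)}\text{ close to }0\bigr)=1-q(p)$ vanishes on $(1/3,2/3)$. Tracking how probability mass can travel back to a neighbourhood of~$0$ through chains of reflections reduces $\rho(p)$ to a geometric--type sum (equivalently, one locates the singularities of the associated generating function); that sum vanishes precisely in the subcritical range $\max\{p/(1-p),\,(1-p)/p\}<2$, i.e. precisely for $p\in(1/3,2/3)$, and is strictly positive otherwise --- which also completes part~1 by exhibiting a genuinely non--degenerate Bernoulli limit off that interval. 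For part~3, fix $p=1/2$: the unbiased process is invariant under $n\mapsto -n$, so $X_N\overset{\mathcal L}{=}-X_N$, hence $X_N-\tfrac12$ is symmetric about~$0$, $\mathbb{E}[X_N-\tfrac12]=0$ and $\mathbb{V}_N(X_N)=\mathbb{E}\bigl[(X_N-\tfrac12)^2\bigr]$. Re--running the analysis of parts~1--2 at $p=1/2$ should give $\mathbb{V}_N(X_N)\asymp 1/N$ --- upper bound because $X_N$ lies at distance $\gg 1/N$ from $1/2$ with probability only $O(1/N)$, lower bound because that probability is also $\gtrsim 1/N$ --- whence $B_L^2=\sum_{N=3}^L\mathbb{V}_N(X_N)\asymp\sum_{N=3}^L 1/N\asymp\ln L$ and $B_L\asymp\sqrt{\ln L}$. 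Since the $X_N$ are taken independent, $|X_N-\tfrac12|\le\tfrac12$ is uniformly bounded and $B_L\to\infty$, the Lindeberg condition holds for the trivial reason that the truncated second moments $\mathbb{E}\bigl[(X_N-\tfrac12)^2\,\mathbf{1}_{\{|X_N-1/2|>\varepsilon B_L\}}\bigr]$ vanish identically once $B_L>1/(2\varepsilon)$; the Lindeberg--Feller theorem then yields $B_L^{-1}\sum_{N=3}^L(X_N-\tfrac12)\to\mathcal N(0,1)$, and the two extra summands $N=1,2$ (deterministic, each equal to $-\tfrac12$) merely shift the sum by $-1$, negligible since $B_L\to\infty$.

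\noindent\textbf{Main obstacle.} The hardest step, by far, is the bookkeeping of the modular wrap--arounds in the recursion for $\varphi_N$ in part~1: these corrections are \emph{not} negligible (discarding them would wrongly predict $X_N^{(p)}\to 0$ via a weakly--dependent central limit theorem for $\sum_m\varepsilon_m\cdots\varepsilon_N$), and it is precisely their cumulative effect --- the interplay between the non--summable drift and the ever more frequent reflections of $\R/\Z$ --- that forces the concentration on $\{0,1/2\}$ and, in refined form, both the $1/3$--$2/3$ threshold of part~2 and the variance order $1/N$ of part~3.
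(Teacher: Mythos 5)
Your reduction of the process to the random recursion $Y_{k+1}=\varepsilon_{k+1}(Y_k+2)\bmod (k+1)$ is correct and is exactly equivalent to the paper's Proposition~\ref{recurrel} (a nice repackaging: the boundary cases $n\equiv 0,\pm1$ are absorbed by the modular reduction), and your identification of the tension between the reflections and the non--summable drift is the right heuristic. But the proposal stops where the actual work begins, and it contains one substantive error. For Part~1, the convergence of the Fourier coefficients and the concentration on $\{0,1/2\}$ are stated as \emph{goals} and then explicitly deferred to the ``main obstacle''; no mechanism is given for controlling the wrap--around corrections. The paper needs an entire apparatus here (Lemmata~\ref{lem1}--\ref{lem6}: a Taylor--expanded recursion for $\mathcal{E}_N^{(p)}[\varphi]$, geometric decay for odd test functions, a telescoping identity with sign--definite increments for even ones, a Weierstrass density argument, and finally the nonnegative test function $\psi(x)=2\pi(1/2-x)\sin(2\pi x)$ whose zero set is exactly $\{0,1/2\}$); none of this is replaced by anything in your sketch. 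For Part~2, your claimed dichotomy --- that the limit carries \emph{strictly positive} mass at $0$ for $p\notin(1/3,2/3)$ --- is not only unproven but contradicts the paper's numerical evidence and its stated open problem (Section~\ref{finalsec}); the $1/3$--$2/3$ threshold arises as a limitation of the inductive exponential bound $g_N(n,p)\le K\beta^{\langle n\rangle_N}/\gamma^N$ (Proposition~\ref{keypropopmidle}), which is the real content of Part~2 and which your ``geometric--type sum'' does not supply.

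For Part~3 there are two concrete problems. First, the upper bound you invoke --- ``$X_N$ lies at distance $\gg 1/N$ from $1/2$ with probability only $O(1/N)$'' --- is false: your own unwound recursion gives, away from wrap--arounds, $W_{k+1}=\pm(W_k+3/2)$ for $W_k=Y_k-k/2$, hence $\mathbb{E}[W_k^2]\sim 9k/4$, so the typical deviation of $X_N$ from $1/2$ is of order $1/\sqrt N$, not $1/N$. The conclusion $\mathbb{V}_N(X_N)\asymp 1/N$ is nevertheless correct, but establishing it (especially the lower bound on $B_L^2$) requires the partial--summation argument of Lemma~\ref{refinedmomentestim}, which in turn uses the exponential tail bound of Proposition~\ref{propdecayCLT} \emph{and} the already--proved convergence $X_N\to 1/2$ to identify $\lim C_N=9/4$. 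Second, $\mathbb{E}[X_N]=1/2$ is not exact: the circle symmetry gives $X_N\overset{\mathcal L}{=}(1-X_N)\,\chi_{\{X_N\neq 0\}}$, so $\mathbb{E}[X_N]=(1-g_N(0))/2$, and the centering error $\sum_N g_N(0)/2$ must be shown summable (again via the exponential decay of $g_N(0)$) before Slutsky can be applied. Your Lindeberg observation for bounded variables is fine once $B_L\to\infty$ is known, but that is precisely the estimate that is missing.
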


\noindent The notation $B_L\asymp \sqrt{\ln L}$ means the existence of constants $C>c>0$ such that for all $L\ge 1$ large enough, $c\le B_L/ \sqrt{\ln L}\le C$. \\

\noindent Numerical simulations displayed in the appendix show that, in the notations of the theorem, the convergence in probability $X_N^{(p)}\underset{N\rightarrow\infty}{\longrightarrow} 1/2$ should actually hold for \emph{any} value of the parameter $p\in (0,1)$. The difficulty in proving this claim in Case (1) of the theorem is outlined in the final Section~\ref{finalsec}. \\

\noindent It should be noted that the convergence in  law in the probabilistic case $p\in (0,1)$ stands in sharp contrast with the deterministic situation $p=1$. Indeed, it is then easy to deduce from any of the above points (a), (b) or (c) that the quantity $a_N$ normalised by the factor of $1/N$ (so as to lie in the unit interval) does not admit a limit as $N$ tends to infinity. In the deterministic case $p=0$ however, it is not hard to see that the convergence in probability to the constant $1/2$ still holds.

\paragraph{Organisation of the paper.} In Section~\ref{sec1}, a set of recursion relations providing the probabilities of survival in a round with $N$ participants  is established as functions of  probabilities of survival in a round with $N-1$ participants. This crucially intervenes in the proof of Theorem~\ref{mainthm}  which Section~\ref{sec2} is devoted to. The final Section~\ref{finalsec} deals with  open problems related to other probabilistic variants of the Josephus problem.

\paragraph{Acknowledgments.} The authors would like to thank the referee for the outstanding quality of his report. The current version of the paper contains proofs which in some places have been considerably simplified thanks to his suggestions.

\noindent The second and third--named authors were supported by the Heilbronn Institute of Mathematical Research through the UKRI grant~: Additional Funding for Mathematical Sciences (EP/521917/1). The last named author  was supported by the FAPESP Grant 2023/06371-2.

\section{Recurrence Formulae for the Probabilities of Survival}\label{sec1}

To facilitate the expression of the probabilities of survival, the domain of the map $n\in\left\llbracket 0, N-1\right\rrbracket \;\mapsto\; g_N(n,p)$ is extended to $\Z$ by evaluating its argument modulo $N\ge 3$. Thus, for instance, $g_N(-1, p)= g_N(N-1, p)$. In what follows, it will always be understood that the integer $n$ is taken modulo $N$ before applying the formulae.

\begin{prop}[Recursion relations for the probabilities of survival.]\label{recurrel} \sloppy Let $N\ge 3$ and $p\in (0,1)$. Whenever $N\ge 4$, the probability vector $\left(g_N(n, p)\right)_{0\le n\le N-1}$ meets the recurrence relation
\begin{equation*}
g_N(n, p) = \begin{cases}
  g_{N-1}(-1, p)  &  \text{ if }  n \equiv 0 \pmod{N}  \\
  (1-p)\cdot g_{N-1}(-2, p) & \text{ if } n \equiv 1 \pmod{N}\\
  p\cdot g_{N-1}(-2, p)  & \text{ if } n\equiv -1\pmod{N} \\
  p \cdot g_{N-1}(n-2, p) + (1-p)\cdot  g_{N-1}(N-n-2, p)  &  \text{ otherwise. }  
\end{cases}
\end{equation*}
with base case $\left(g_3(0,p), g_3(1,p), g_3(2,p)\right)=(0, 1-p, p)$.
\end{prop}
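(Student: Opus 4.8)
The plan is to set up the recursion by carefully tracking what happens in the first step of the elimination process, reducing a round with $N$ participants to a round with $N-1$ participants on a shrunk circle. Fix $N\ge 4$ and $p\in(0,1)$. In the first step, Participant $0$ holds the knife and, with probability $p$, eliminates Participant $1$ and hands the knife to Participant $2$ who now must stab to the \emph{right} (same direction); with probability $1-p$, Participant $0$ eliminates Participant $N-1$ and hands the knife to Participant $N-2$ who now must stab to the \emph{left}. In either case, after this first removal there remain $N-1$ people arranged on the circle, and the continuation of the process on these $N-1$ people is \emph{exactly} an instance of the same probabilistic elimination process with $N-1$ participants — crucially because the rule "continue in the current direction with probability $p$, reverse with probability $1-p$" is symmetric under reflection, so the branch in which the knife currently points left is the mirror image of the standard process.

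First I would handle the two branches separately and then add them weighted by $p$ and $1-p$. In the right-stabbing branch (probability $p$): after deleting Participant $1$, relabel the survivors $0,2,3,\dots,N-1$ as a fresh instance with $N-1$ players, where the new holder (old label $2$) is the new "Participant $0$" and the labels increase counterclockwise. Tracing old label $n$ through this relabelling, old $n$ sits at new position $n-2 \pmod{N-1}$, so its survival probability in this branch is $g_{N-1}(n-2,p)$. In the left-stabbing branch (probability $1-p$): after deleting Participant $N-1$, the new holder is old label $N-2$, but now the "positive" direction of the fresh $(N-1)$-round is clockwise. Reflecting, old label $n$ lands at new position $N-n-2 \pmod{N-1}$, giving survival probability $g_{N-1}(N-n-2,p)$. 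Combining, for a "generic" $n$ one gets $g_N(n,p) = p\,g_{N-1}(n-2,p) + (1-p)\,g_{N-1}(N-n-2,p)$. The special cases arise exactly when one of the two offsets points at the participant eliminated in the \emph{other} branch or at Participant $0$ themselves: for $n\equiv 0$, Participant $0$ never survives branch-wise as written but is recovered as the new holder's neighbour, and tracking gives $g_N(0,p)=g_{N-1}(-1,p)$; for $n\equiv 1$, survival is impossible in the right branch (eliminated immediately), leaving only $(1-p)\,g_{N-1}(-2,p)$; symmetrically for $n\equiv -1$, only $p\,g_{N-1}(-2,p)$ survives. Finally I would verify the base case $N=3$ directly: Participant $0$ kills Participant $1$ with probability $p$ (then Participant $2$ kills Participant $0$, leaving $2$ as survivor) or kills Participant $2$ with probability $1-p$ (then Participant $1$ kills Participant $0$, leaving $1$), so $(g_3(0,p),g_3(1,p),g_3(2,p)) = (0,1-p,p)$.

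The main obstacle is purely bookkeeping: getting the index arithmetic right modulo $N-1$ in the reflected (left-stabbing) branch, and checking that the four displayed cases genuinely exhaust all residues and that the "otherwise" formula degenerates correctly at the boundary residues $n\equiv 0,\pm1$ rather than double-counting. I would do this by treating $g_{N-1}$ as a function on $\Z/(N-1)\Z$ (consistently with the convention stated just before the proposition), writing out the two maps $n\mapsto n-2$ and $n\mapsto N-n-2$ explicitly on the residues $\{0,1,\dots,N-1\}$, and confirming that for $n\in\{2,\dots,N-2\}$ neither offset hits the "forbidden" position corresponding to the neighbour eliminated in the opposite branch, so that both terms contribute; whereas for $n\in\{0,1,N-1\}$ exactly one term survives, matching the stated piecewise formula.
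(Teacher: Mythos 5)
Your proposal is correct and follows essentially the same route as the paper: a case analysis of the first elimination step, relabelling the surviving $N-1$ participants (with a reflection in the left-stabbing branch so that the continuation is again a standard instance of the process with parameter $p$), yielding the generic two-term formula $p\cdot g_{N-1}(n-2,p)+(1-p)\cdot g_{N-1}(N-n-2,p)$ together with the degenerate cases at $n\equiv 0,\pm 1$. Your index bookkeeping (old $n\mapsto n-2$ on the right, old $n\mapsto N-n-2$ on the left, with old $0$ landing at $-1\pmod{N-1}$ in both branches) and your direct check of the base case $N=3$ match the paper's argument.
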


\begin{proof}
The base case $N=3$ is easily verified by hand. Assume therefore that $N\ge 4$. The recurrence relation is obtained from the analysis of the consequences of the first step of the elimination process (carried out by the $0^{\textrm{th}}$ Participant) on the disposition of the remaining participants. To this end, consider the distinction of cases induced by the statement~:

\begin{itemize}
\item[$\bullet$]\emph{Probability of survival $g_{N}(0, p)$ of participant $n \equiv 0 \pmod{N}$.} After the first stabbing, with probability $p$, participant 1 is out and participant 2 holds the knife. Consequently, the  $0^{th}$ person who initiated the process with $N$ participants becomes participant $-1 \pmod{N-1}$ in a new round with $N-1$ participants~: his probability of survival is then $g_{N-1}(-1, p)$. Similarly, with probability $1-p$, participant $-1 \pmod{N}$ is out and participant $-2 \pmod{N}$ holds the knife. Since the direction of stabbing is reversed compared with the previous situation, the  $0^{th}$ person who initiated the process with $N$ participants still becomes participant $-1 \pmod{N-1}$ in a  new round with $N-1$ participants~: his probability of survival is thus again $g_{N-1}(-1, p)$. Putting these two situations together, one obtains that $$g_{N}(0, p)\;=\; p\cdot g_{N-1}(-1, p)+(1-p)\cdot g_{N-1}(-1, p)\;=\; g_{N-1}(-1, p).$$

\item[$\bullet$]\emph{Probability of survival $g_{N}(1, p)$ of participant $n \equiv 1 \pmod{N}$.} After the first step of the elimination process, with probability $p$, participant 1 is out~: his survival probability vanishes. With probability $1-p$, participant $-1 \pmod{N}$ is out and, since the direction of stabbing is reversed compared with the previous situation, participant 1 now carries label $-2 \pmod{N-1}$ in the new round with $N-1$ participants. This yields that $$g_{N}(1, p)\;=\; p\cdot 0+(1-p)\cdot g_{N-1}(-2, p)\;=\; (1-p)\cdot g_{N-1}(-2, p).$$

\item[$\bullet$]\emph{Probability of survival $g_{N}(-1, p)$ of participant $n \equiv -1 \pmod{N}$.} This case is \emph{mutatis mutandis} analogous to the previous one in such a way that $$g_{N}(-1, p)\;=\; p\cdot g_{N-1}(-2, p) +(1-p)\cdot 0\;=\; p\cdot g_{N-1}(-2, p).$$

\item[$\bullet$]\emph{Probability of survival $g_{N}(n, p)$ of participant $n \not\equiv 0, \pm 1 \pmod{N}$.} In this case, after the first elimination, with probability $p$, Participant $n$ carries the label $n-2$ and his survival probability in the new round with $N-1$ participants becomes $g_{N-1}(n-2, p)$. Similarly, with probability $1-p$, taking into account the fact that the direction of stabbing is reverse compared with the previous situation, Participant $n$ carries the label $-n-1\equiv N-n-2\pmod{N-1}$ in a new round with $N-1$ participants in such a way that his probability of survival becomes $g_{N-1}(N-n-2, p)$. Putting these two situations together, one obtains that $$g_N(n,p)\;=\; p \cdot g_{N-1}(n-2, p) + (1-p)\cdot  g_{N-1}(N-n-2, p).$$
\end{itemize}
This completes the proof of the proposition.

\end{proof}

\section[Skewness of the Elimination Process ]{Skewness of the Elimination Process and Rate of Convergence to the Position of the Survivor}\label{sec2}

Throughout this section, unless specified otherwise, the parameter $p\in (0,1)$ and the integer $N\ge 3$ are fixed. The goal is to establish successively each of the three points in Theorem~\ref{mainthm}, the notations of which statements are adopted throughout.\\

\noindent To this end, it is more convenient to work with the closed unit interval $[0,1]$ rather than the space $\R/\Z$. The set of continuous functions $\varphi$ over $\R/\Z$ is then identified with the set of continuous functions over $[0,1]$, denoted by $C^0([0,1])$, with periodic boundary conditions. Given such a function $\varphi$, denote for the sake of simplicity of notations the expectation of the random variable $\varphi\left(X_N^{(p)}\right)$ as 

\begin{equation*}
\mathcal{E}_N^{(p)}[\varphi]\; :=\;  \mathbb{E}^{(p)}\!\left[\varphi\left(X_N^{(p)}\right)\right]
\end{equation*}
in such a way that 
\begin{equation}\label{functJ}
\mathcal{E}_N^{(p)}[\varphi]\; =\; \sum_{n=0}^{N-1}\varphi\left(\frac{n}{N}\right)\cdot g_N(n,p).
\end{equation}
Clearly,
\begin{equation}\label{normjnp}
\left|\mathcal{E}_N^{(p)}[\varphi]\right|\;\le\; \left\|\varphi\right\|_{\infty},
\end{equation}
where $\left\|\varphi\right\|_{\infty}$ denotes the sup--norm of a given bounded map $\varphi$ over its domain of definition.



\subsection{Limiting Behavior of the Random Process when $p\in \left(0,1\right)$}

The first point in Theorem~\ref{mainthm} amounts to claiming that the distributional limit $X^{(p)}$ of the sequence $\left(X^{(p)}_N\right)_{N\ge 3}$ exists and takes value in the set $\left\{0, 1/2\right\}$. This is established in this section as a consequence of a succession of lemmata, each relying on the preceeding ones. The following quantity plays an important \emph{rôle} in the proof~:
\begin{equation}\label{defeta}
\eta_N(p)\;=\; \max_{-2\le n\le 2} g_N(n,p).
\end{equation}


\begin{lem}[Approximate recursion relation for the expectation under regula\-rity assumptions]\label{lem1}
Let $N\ge 4$ be an integer and let $\varphi$ be a twice continuously differentiable function over $[0,1]$. Then,
{\allowdisplaybreaks
\begin{align*}
\mathcal{E}_N^{(p)}[\varphi]\;&=\; \sum_{n=0}^{N-2}g_{N-1}(n,p)\cdot \left[p\cdot\varphi\left(\frac{n}{N-1}\right)+\left(1-p\right)\cdot\varphi\left(1-\frac{n}{N-1}\right)\right.\\
& \left. +\left(\frac{2}{N}-\frac{n}{N(N-1)}\right)\cdot \left(p\cdot\varphi'\left(\frac{n}{N-1}\right)-(1-p)\cdot \varphi'\left(1-\frac{n}{N-1}\right)\right)\right]\\
&+O_{\varphi}\left(\frac{1}{N^{2}}\right)+O\left(\eta_N(p)\cdot \max\left\{\left|\varphi(0)\right|, \left|\varphi(1)\right|, \frac{\left|\varphi'(0)\right|}{N}, \frac{\left|\varphi'(1)\right|}{N}\right\}\right).
\end{align*}
}
\end{lem}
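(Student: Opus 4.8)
The plan is to substitute the recursion of Proposition~\ref{recurrel} into the defining identity~\eqref{functJ}, reindex the resulting sums, and Taylor-expand $\varphi$ to second order. First I would isolate the three indices that escape the generic formula of Proposition~\ref{recurrel}, writing
\[
\mathcal{E}_N^{(p)}[\varphi]=\varphi(0)\,g_N(0,p)+\varphi\!\left(\tfrac1N\right)g_N(1,p)+\varphi\!\left(\tfrac{N-1}{N}\right)g_N(-1,p)+\sum_{n=2}^{N-2}\varphi\!\left(\tfrac nN\right)g_N(n,p).
\]
By the very definition~\eqref{defeta} of $\eta_N(p)$, each of $g_N(0,p),\,g_N(1,p),\,g_N(-1,p)$ is $\le\eta_N(p)$; combining this with the second-order expansions $\varphi(1/N)=\varphi(0)+\varphi'(0)/N+O(\|\varphi''\|_\infty/N^2)$ and $\varphi((N-1)/N)=\varphi(1)-\varphi'(1)/N+O(\|\varphi''\|_\infty/N^2)$ shows that these three terms contribute exactly the two error terms displayed in the statement, the quadratic remainders being multiplied by quantities $\le\eta_N(p)\le 1$.

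Next I would treat the central block $2\le n\le N-2$ using the fourth case $g_N(n,p)=p\,g_{N-1}(n-2,p)+(1-p)\,g_{N-1}(N-n-2,p)$ and splitting it into its $p$-part and its $(1-p)$-part. Reindexing the first by $m=n-2$ and the second by $m=N-n-2$, and invoking periodicity to rewrite $\varphi(n/N)=\varphi\big(1-(m+2)/N\big)$ in the latter, both parts run over $0\le m\le N-4$, so the block equals
\[
\sum_{m=0}^{N-4}g_{N-1}(m,p)\left[p\,\varphi\!\left(\tfrac{m+2}{N}\right)+(1-p)\,\varphi\!\left(1-\tfrac{m+2}{N}\right)\right].
\]

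The engine of the computation is then the elementary identity $\tfrac{m+2}{N}-\tfrac{m}{N-1}=\tfrac2N-\tfrac{m}{N(N-1)}=:\delta_m$, where $0<\delta_m\le 2/N$. A second-order Taylor expansion of $\varphi$ at $m/(N-1)$ and of $x\mapsto\varphi(1-x)$ at $m/(N-1)$ turns the bracket above into
\[
p\,\varphi\!\left(\tfrac{m}{N-1}\right)+(1-p)\,\varphi\!\left(1-\tfrac{m}{N-1}\right)+\delta_m\!\left[p\,\varphi'\!\left(\tfrac{m}{N-1}\right)-(1-p)\,\varphi'\!\left(1-\tfrac{m}{N-1}\right)\right]+O\!\left(\|\varphi''\|_\infty\,\delta_m^2\right).
\]
Multiplying by $g_{N-1}(m,p)$ and summing, the remainder contributes only $O_\varphi(1/N^2)$ since $\delta_m^2=O(1/N^2)$ and $\sum_m g_{N-1}(m,p)\le 1$, and what remains is precisely the main sum of the statement, but truncated to $0\le n\le N-4$.

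It then remains to restore the full range $0\le n\le N-2$ by adding back the terms $n=N-3$ and $n=N-2$. These involve $g_{N-1}(-2,p)$ and $g_{N-1}(-1,p)$, and Proposition~\ref{recurrel} (through $g_{N-1}(-1,p)=g_N(0,p)$ and $(1-p)\,g_{N-1}(-2,p)=g_N(1,p)$) bounds both by $O(\eta_N(p))$, the implied constant depending only on the fixed $p$; expanding $\varphi$ and $\varphi'$ once more to second order near $1$ so as to isolate the genuine $\varphi'(0),\varphi'(1)$ contributions and absorb the rest into $O_\varphi(1/N^2)$, these two terms fall within the stated error, and summing the three pieces yields the lemma. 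The argument is essentially bookkeeping; the only points requiring care are getting the two reindexings together with the periodicity substitution right, and tracking which remainders are uniform in $p$ and which are not — the edge estimates near $0$ and $1$ are not, which is precisely why the implied constants in the $\eta_N(p)$ error term are allowed to depend on $p$ (harmless, $p$ being fixed throughout the section).
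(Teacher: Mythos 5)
Your proposal is correct and follows essentially the same route as the paper: isolate the three exceptional indices $n\equiv 0,\pm 1$, bound them via $\eta_N(p)$ after a Taylor expansion at $0$ and $1$, apply the fourth case of Proposition~\ref{recurrel} to the central block, reindex, and Taylor-expand about $n/(N-1)$ using $\delta_m=2/N-m/(N(N-1))$ (the paper merely extends the sum to $N-2$ before expanding rather than adding the two terms back afterwards, which is cosmetic). One tiny refinement: the implied constant in the $\eta_N(p)$ error term can be kept absolute, as the statement intends, by noting that $\min\{1/p,\,1/(1-p)\}\le 2$, so that $g_{N-1}(-2,p)\le 2\,\eta_N(p)$ via the $n\equiv\pm1$ cases of Proposition~\ref{recurrel}.
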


\noindent The subscript in the notation $O_{\varphi}(\,\cdot\,)$ means that the implicit constant in the big-O notation depends on $\varphi$ (whereas it is absolute otherwise).

\begin{proof}
Applying  a Taylor expansion yields that 
{\allowdisplaybreaks
\begin{align*}
\mathcal{E}_N^{(p)}[\varphi]\;&=\; \varphi(0)\cdot g_N(0,p)+\varphi\left(\frac{1}{N}\right)\cdot g_N(1,p)+\varphi\left(\frac{N-1}{N}\right)\cdot g_N(-1, p)\\
& \quad + \sum_{n=2}^{N-2}\varphi\left(\frac{n}{N}\right)\cdot g_N(n,p)\\
&\underset{\eqref{defeta}}{=} \sum_{n=2}^{N-2}\varphi\left(\frac{n}{N}\right)\cdot g_N(n,p) + O\left(\eta_N(p)\cdot \max\left\{\left|\varphi(0)\right|, \left|\varphi(1)\right|, \frac{\left|\varphi'(0)\right|}{N}, \frac{\left|\varphi'(1)\right|}{N}\right\}\right)\\
& \quad + O_{\varphi}\left(\frac{1}{N^{2}}\right).
\end{align*}
}
\noindent From the recursion relations established in Proposition~\ref{recurrel}, this last sum can be expanded as
{\allowdisplaybreaks
\begin{align*}
\sum_{n=2}^{N-2}\varphi\left(\frac{n}{N}\right)\cdot g_N(n,p) \;&=\; p\cdot \sum_{n=2}^{N-2}\varphi\left(\frac{n}{N}\right)\cdot g_{N-1}\left(n-2, p\right)\\
&\qquad +(1-p)\cdot \sum_{n=2}^{N-2}\varphi\left(\frac{n}{N}\right)\cdot g_{N-1}(N-n-2, p)\\
&=\; p\cdot \sum_{n=0}^{N-4}\varphi\left(\frac{n+2}{N}\right)\cdot g_{N-1}\left(n, p\right)\\
&\qquad +(1-p)\cdot \sum_{n=0}^{N-4}\varphi\left(\frac{N-n-2}{N}\right)\cdot g_{N-1}(n, p)\\
&\underset{\eqref{defeta}}{=} \sum_{n=0}^{N-2}\left(p\cdot \varphi\left(\frac{n+2}{N}\right)\cdot g_{N-1}\left(n, p\right)\right.\\
&\qquad \left.+ (1-p)\cdot \varphi\left(1-\frac{n+2}{N}\right)\cdot g_{N-1}(n, p) \right)+O_{\varphi}\left(\frac{1}{N^{2}}\right)\\
&\qquad + O\left(\eta_N(p)\cdot \max\left\{\left|\varphi(0)\right|, \left|\varphi(1)\right|, \frac{\left|\varphi'(0)\right|}{N}, \frac{\left|\varphi'(1)\right|}{N}\right\}\right).
\end{align*}
}
From the decomposition $$\frac{n+2}{N}\;=\; \frac{n}{N-1}+\left(\frac{2}{N}-\frac{n}{N(N-1)}\right),$$ another application of a Taylor expansion yields the relations $$\varphi\left(\frac{n+2}{N}\right)\;=\; \varphi\left(\frac{n}{N-1}\right)+\left(\frac{2}{N}-\frac{n}{N(N-1)}\right)\cdot \varphi'\left(\frac{n}{N-1}\right)+O_{\varphi}\left(\frac{1}{N^2}\right)$$ and $$\varphi\left(1-\frac{n+2}{N}\right)\;=\; \varphi\left(1-\frac{n}{N-1}\right)-\left(\frac{2}{N}-\frac{n}{N(N-1)}\right)\cdot \varphi'\left(1-\frac{n}{N-1}\right)+O_{\varphi}\left(\frac{1}{N^2}\right).$$
Collecting together all the components of the calculation then gives 
\begin{align*}
\mathcal{E}_N^{(p)}[\varphi]\;&=\; \sum_{n=0}^{N-2}g_{N-1}(n,p)\cdot \left[p\cdot \varphi\left(\frac{n}{N-1}+(1-p)\cdot \varphi\left(1-\frac{n}{N-1}\right)\right) \right.\\
&\qquad \left. +\left(\frac{2}{N}-\frac{n}{N(N-1)}\right)\cdot\left(p\cdot \varphi'\left(\frac{n}{N-1}\right)-(1-p)\cdot\varphi'\left(1-\frac{n}{N-1}\right)\right) \right]\\
&\qquad +O_{\varphi}\left(\frac{1}{N^2}\right)+ O\left(\eta_N(p)\cdot \max\left\{\left|\varphi(0)\right|, \left|\varphi(1)\right|, \frac{\left|\varphi'(0)\right|}{N}, \frac{\left|\varphi'(1)\right|}{N}\right\}\right),
\end{align*}
which concludes the proof.
\end{proof}

\noindent In what follows, a function $\varphi~: [0,1]\rightarrow\R$ is referred to as an \emph{odd function about $1/2$} if $\varphi(x)=-\varphi(1-x)$ for all $x\in [0,1]$. Similarly, it is  \emph{even  about $1/2$} if $\varphi(x)=\varphi(1-x)$ for all $x\in [0,1]$.

\begin{lem}[Decay of the expectation under assumptions of periodicity, regularity and oddness]\label{lem2}
Let $N\ge 4$ be an integer and let $\varphi$ be a twice continuously differentiable function over $[0,1]$. Assume that $\varphi$ is odd about 1/2 and that it meets the boundary condition $\varphi(0)=\varphi(1)$. Then, there exists a constant $C(\varphi, p)>0$ depending only on $\varphi$ and $p$ such that $$\mathcal{E}^{(p)}_N[\varphi]\;\le\; \frac{C(\varphi, p)}{N}\cdotp$$
\end{lem}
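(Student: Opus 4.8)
The plan is to apply Lemma~\ref{lem1} to the function $\varphi$ and exploit the oddness about $1/2$ to cancel the main terms, thereby turning the approximate recursion for $\mathcal{E}^{(p)}_N[\varphi]$ into one that contracts. First, note that if $\varphi$ is odd about $1/2$, then $\varphi(1-x) = -\varphi(x)$; in particular $\varphi(0) = -\varphi(1)$, which together with the assumed boundary condition $\varphi(0)=\varphi(1)$ forces $\varphi(0)=\varphi(1)=0$. This kills the first term inside the $O(\eta_N(p)\cdot\max\{\dots\})$ error in Lemma~\ref{lem1}, leaving there only the contribution of $\varphi'(0)/N$ and $\varphi'(1)/N$, which is $O_\varphi(\eta_N(p)/N)$. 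Since $\eta_N(p) \le 1$ trivially (it is a maximum of probabilities), this error is $O_\varphi(1/N)$, which is already of the desired order.

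Next I would simplify the main sum in Lemma~\ref{lem1}. Writing $m = N-1$ and $x_n = n/m$, the bracketed expression is
\[
p\,\varphi(x_n) + (1-p)\,\varphi(1-x_n) + \Bigl(\tfrac{2}{N} - \tfrac{n}{N(N-1)}\Bigr)\bigl(p\,\varphi'(x_n) - (1-p)\,\varphi'(1-x_n)\bigr).
\]
Using $\varphi(1-x_n) = -\varphi(x_n)$ and $\varphi'(1-x_n) = \varphi'(x_n)$ (differentiating the oddness relation), this collapses to
\[
(2p-1)\,\varphi(x_n) + (2p-1)\Bigl(\tfrac{2}{N} - \tfrac{n}{N(N-1)}\Bigr)\varphi'(x_n).
\]
The coefficient $\tfrac{2}{N} - \tfrac{n}{N(N-1)}$ is $O(1/N)$ uniformly in $0 \le n \le N-2$, and $\|\varphi'\|_\infty < \infty$, so the second piece contributes $O_\varphi(1/N) \cdot \sum_{n} g_{N-1}(n,p) = O_\varphi(1/N)$ since the $g_{N-1}(\cdot,p)$ sum to $1$. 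Hence
\[
\mathcal{E}^{(p)}_N[\varphi] \;=\; (2p-1)\sum_{n=0}^{N-2} g_{N-1}(n,p)\,\varphi\!\Bigl(\tfrac{n}{N-1}\Bigr) \;+\; O_\varphi\!\Bigl(\tfrac{1}{N}\Bigr) \;=\; (2p-1)\,\mathcal{E}^{(p)}_{N-1}[\varphi] \;+\; O_\varphi\!\Bigl(\tfrac{1}{N}\Bigr),
\]
using \eqref{functJ} for $N-1$ participants. Let $q := |2p-1| < 1$ and let $u_N := |\mathcal{E}^{(p)}_N[\varphi]|$; then $u_N \le q\,u_{N-1} + C_\varphi/N$ for a constant $C_\varphi = C(\varphi,p)$ and all $N \ge 4$, with $u_3 \le \|\varphi\|_\infty$ by \eqref{normjnp}.

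Finally, I would solve this linear recursion inequality. Iterating gives $u_N \le q^{N-3}u_3 + C_\varphi\sum_{k=4}^{N} q^{N-k}/k$. The geometric head $q^{N-3}u_3$ is exponentially small, hence $o(1/N)$. For the tail, split the sum at $k = \lceil N/2\rceil$: the terms with $k \ge N/2$ contribute at most $(2C_\varphi/N)\sum_{j\ge 0} q^j = \frac{2C_\varphi}{N(1-q)}$, while the terms with $k < N/2$ contribute at most $C_\varphi \cdot q^{N/2} \cdot \sum_{k} 1/k = O_\varphi(q^{N/2}\ln N)$, again $o(1/N)$. Combining, $u_N \le C(\varphi,p)/N$ for a suitable constant, which is the claim. (Strictly, one gets $u_N \le C/N$ for $N$ large; enlarging the constant to absorb the finitely many small $N$ using \eqref{normjnp} handles all $N \ge 4$.)

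The main obstacle is purely bookkeeping: making sure every error term generated in Lemma~\ref{lem1} — in particular the $\eta_N(p)$-dependent one — is genuinely $O_\varphi(1/N)$ once $\varphi(0)=\varphi(1)=0$ is used, and that the $\varphi'$-type terms are controlled uniformly in $n$. The contraction factor $q = |2p-1| < 1$ is what makes the whole argument work, and it is precisely here that the hypothesis $p \in (0,1)$ (so $q < 1$) is essential; at $p \in \{0,1\}$ the recursion would not contract and a different argument would be needed.
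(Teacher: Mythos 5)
Your proposal is correct and follows essentially the same route as the paper: use oddness (together with $\varphi(0)=\varphi(1)$, forcing $\varphi(0)=\varphi(1)=0$) to collapse Lemma~\ref{lem1} into the contraction $\left|\mathcal{E}_N^{(p)}[\varphi]\right|\le |2p-1|\cdot\left|\mathcal{E}_{N-1}^{(p)}[\varphi]\right|+C/N$, then iterate and balance the geometric and harmonic contributions by splitting around $N/2$. The only cosmetic difference is that the paper stops the iteration after $k\approx N/2$ steps and bounds the remaining expectation by $\|\varphi\|_\infty$, whereas you iterate down to the base case and split the resulting sum; these are equivalent.
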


\begin{proof}
Under the assumptions of the statement (which imply in particular that $\varphi(0)=\varphi(1)=0$), Lemma~\ref{lem1} guarantees the existence of a constant $C'(\varphi, p)>0$ such that for all integers $N> 3$, 
\begin{equation}\label{lemtechle2}
\left|\mathcal{E}_N^{(p)}[\varphi]\right|\;\le\; \left|2p-1\right|\cdot \left|\mathcal{E}_{N-1}^{(p)}[\varphi]\right|+\frac{C'(\varphi, p)}{N}\cdotp
\end{equation} 
By induction, this implies that for all $k\in\left\llbracket 1, N-3\right\rrbracket$, $$\left|\mathcal{E}_N^{(p)}[\varphi]\right|\;\le\; \left|2p-1\right|^k\cdot \left|\mathcal{E}_{N-k}^{(p)}[\varphi]\right|+C'(\varphi, p)\cdot\sum_{l=0}^{k-1}\frac{\left|2p-1\right|^l}{N-l}\cdotp$$Since $|2p-1|<1$, the series $\sum_{l\ge 0}\left|2p-1\right|^l$ converges in such a way that for all $k\in\left\llbracket 1, N-3\right\rrbracket$, $$\left|\mathcal{E}_N^{(p)}[\varphi]\right|\;\le\; \left|2p-1\right|^k\cdot \left\|\varphi\right\|_{\infty}+\frac{C'(\varphi, p)}{\left(N-k+1\right)\cdot \left(1-|2p-1|\right)}\cdotp$$Optimising the choice of  $k$ yields to choose it as the unique integer in the  interval $(N/2-1, N/2]$. Then, $$\left|\mathcal{E}_N^{(p)}[\varphi]\right|\;\le\; \left|2p-1\right|^{\frac{N}{2}-1}\cdot \left\|\varphi\right\|_{\infty}+\frac{C'(\varphi, p)}{\left(N/2+1\right)\cdot \left(1-|2p-1|\right)},$$where the right--hand side decays as a $O(1/N)$ when $N$ tends to infinity. This is readily seen to imply the lemma and thus to conclude the proof.
\end{proof}

\noindent The case that the map $\varphi$ is even about $1/2$ requires more regularity assumptions to analyse the limiting behavior of the real sequence $\left(\mathcal{E}_N^{(p)}[\varphi]\right)_{N\ge 3}$. This is achieved in three steps.

\begin{lem}[Telescopic asymptotic decomposition of the expectation under assumptions of periodicity of the derivative, regularity and evenness]\label{lem3}
Let $N\ge 4$ be an integer and let $\varphi$ be a three times continuously differentiable function over $[0,1]$. Assume that  $\varphi$ is even about 1/2 and that it meets the boundary condition $\varphi'(0)=\varphi'(1)=0$. Then, 
\begin{equation}\label{telescopic1}
\mathcal{E}_N^{(p)}[\varphi]-\mathcal{E}_{N-1}^{(p)}[\varphi]\;=\; \frac{1}{N}\cdotp \mathcal{E}_{N-1}^{(p)}\left[\frac{1}{2}\cdotp \varphi'-\varphi^*\right]+O_{\varphi,p}\left(\frac{1}{N^{2}}\right),
\end{equation} 
where the function $\varphi^*$ is defined as $\varphi^*\; :\; x\mapsto x\cdot \varphi'(x).$
\end{lem}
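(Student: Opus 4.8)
The plan is to feed the hypotheses of the lemma into the approximate recursion of Lemma~\ref{lem1}, collapse the resulting bracket using the evenness of $\varphi$, and then recognise the leftover error term as the expectation of an \emph{odd} function, to which Lemma~\ref{lem2} applies. Before invoking Lemma~\ref{lem1}, I would first normalise so that $\varphi(0)=\varphi(1)=0$ (note that evenness about $1/2$ already forces $\varphi(0)=\varphi(1)$): replacing $\varphi$ by $\varphi-\varphi(0)$ affects neither side of~\eqref{telescopic1}, since $\mathcal{E}_N^{(p)}[\varphi-c]=\mathcal{E}_N^{(p)}[\varphi]-c$ (the $g_N(n,p)$ summing to $1$) leaves the left--hand side invariant, while $\varphi^*$ and $\tfrac12\varphi'-\varphi^*$ depend only on $\varphi'$ and so are unchanged. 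The shifted function is still $C^3$, still even about $1/2$, and still subject to $\varphi'(0)=\varphi'(1)=0$; with the extra normalisation $\varphi(0)=\varphi(1)=0$, the term $O\bigl(\eta_N(p)\cdot\max\{|\varphi(0)|,|\varphi(1)|,|\varphi'(0)|/N,|\varphi'(1)|/N\}\bigr)$ in Lemma~\ref{lem1} vanishes outright.

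Next I would exploit evenness. Differentiating $\varphi(x)=\varphi(1-x)$ gives $\varphi'(x)=-\varphi'(1-x)$, so at the sample point $x=n/(N-1)$ one has $p\,\varphi(x)+(1-p)\,\varphi(1-x)=\varphi(x)$ and $p\,\varphi'(x)-(1-p)\,\varphi'(1-x)=\varphi'(x)$. Hence Lemma~\ref{lem1} reduces to
\[
\mathcal{E}_N^{(p)}[\varphi]=\sum_{n=0}^{N-2}g_{N-1}(n,p)\left[\varphi\!\left(\tfrac{n}{N-1}\right)+\left(\tfrac{2}{N}-\tfrac{n}{N(N-1)}\right)\varphi'\!\left(\tfrac{n}{N-1}\right)\right]+O_\varphi\!\left(\tfrac{1}{N^2}\right).
\]
By~\eqref{functJ} the first sum is exactly $\mathcal{E}_{N-1}^{(p)}[\varphi]$, so subtracting it leaves
\[
\mathcal{E}_N^{(p)}[\varphi]-\mathcal{E}_{N-1}^{(p)}[\varphi]=\sum_{n=0}^{N-2}g_{N-1}(n,p)\left(\tfrac{2}{N}-\tfrac{n}{N(N-1)}\right)\varphi'\!\left(\tfrac{n}{N-1}\right)+O_\varphi\!\left(\tfrac{1}{N^2}\right).
\]

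To conclude I would split the coefficient as $\tfrac{2}{N}-\tfrac{n}{N(N-1)}=\tfrac1N\bigl(\tfrac12-\tfrac{n}{N-1}\bigr)+\tfrac{3}{2N}$. The first piece produces $\tfrac1N\,\mathcal{E}_{N-1}^{(p)}\bigl[(\tfrac12-x)\varphi'(x)\bigr]=\tfrac1N\,\mathcal{E}_{N-1}^{(p)}[\tfrac12\varphi'-\varphi^*]$, which is the asserted leading term. The second piece is $\tfrac{3}{2N}\,\mathcal{E}_{N-1}^{(p)}[\varphi']$; since $\varphi$ is even about $1/2$, its derivative $\varphi'$ is odd about $1/2$, is twice continuously differentiable, and satisfies $\varphi'(0)=\varphi'(1)=0$, so Lemma~\ref{lem2} (applied to $\varphi'$ and to $-\varphi'$) gives $\bigl|\mathcal{E}_{N-1}^{(p)}[\varphi']\bigr|=O_{\varphi,p}(1/N)$, whence $\tfrac{3}{2N}\,\mathcal{E}_{N-1}^{(p)}[\varphi']=O_{\varphi,p}(1/N^2)$. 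Collecting the two $O$--terms yields~\eqref{telescopic1}. The delicate points — and the reasons the proof is organised in this order — are twofold: no decay estimate for $\eta_N(p)$ is available at this stage, so the error term attached to it in Lemma~\ref{lem1} must be eliminated through the normalisation $\varphi(0)=\varphi(1)=0$ rather than bounded; and the correction $\tfrac{3}{2N}$ to the coefficient is only an $O(1/N)$ quantity, so at first glance it would spoil the claimed $O(1/N^2)$ remainder — it is rescued precisely because its companion factor $\varphi'$ is odd about $1/2$, which is exactly the situation Lemma~\ref{lem2} was designed to handle. Everything else is routine Taylor--expansion bookkeeping.
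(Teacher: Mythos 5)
Your proposal is correct and follows essentially the same route as the paper's own proof: normalise to $\varphi(0)=\varphi(1)=0$ so the $\eta_N(p)$ error term in Lemma~\ref{lem1} vanishes (together with $\varphi'(0)=\varphi'(1)=0$), collapse the bracket using evenness, identify the leading sum as $\mathcal{E}_{N-1}^{(p)}[\varphi]$, and absorb the leftover $\tfrac{3}{2N}\,\mathcal{E}_{N-1}^{(p)}[\varphi']$ into the $O_{\varphi,p}(1/N^2)$ remainder via Lemma~\ref{lem2} applied to the odd function $\varphi'$. The paper writes the derivative coefficient as $\tfrac1N(2\varphi'-\varphi^*)$ and then splits off $\tfrac32\varphi'$, which is exactly your decomposition of $\tfrac2N-\tfrac{n}{N(N-1)}$ in different notation.
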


\noindent As before, the subscripts in the big O notation has the meaning that the implicit constant depends at most on $\varphi$ and $p$.

\begin{proof}
Equation~\eqref{telescopic1} remains invariant upon translation $\varphi$ by a constant~: it may therefore be assumed without loss of generality that $\varphi(0)=\varphi(1)=0$. Since the evenness of $\varphi$ about $1/2$ implies that its derivative is odd about $1/2$, Lemma~\ref{lem1} yields that 
\begin{align}
\mathcal{E}_N^{(p)}[\varphi]\;=\; \mathcal{E}_{N-1}^{(p)}[\varphi] &+ \frac{1}{N}\cdotp \mathcal{E}_{N-1}^{(p)}\left[2\varphi'-\varphi^*\right]+O_{\varphi}\left(\frac{1}{N^{2}}\right)\nonumber\\
&+O\left(\eta_N(p)\cdot\max\left\{\frac{\left|\varphi'(0)\right|}{N}, \frac{\left|\varphi'(1)\right|}{N}\right\}\right),\label{telescopic}
\end{align} 
where the last term vanishes by assumption. Furthermore, from Lemma~\ref{lem2}, 
\begin{align*}
\frac{1}{N}\cdotp \mathcal{E}_{N-1}^{(p)}\left[2\varphi'-\varphi^*\right]\;&=\;  \frac{1}{N}\cdotp \mathcal{E}_{N-1}^{(p)}\left[\frac{1}{2}\cdot \varphi'-\varphi^*\right]+\frac{3}{2N}\cdot \mathcal{E}_{N-1}^{(p)}\left[\varphi'\right]\\
&=\; \frac{1}{N}\cdotp \mathcal{E}_{N-1}^{(p)}\left[\frac{1}{2}\cdot \varphi'-\varphi^*\right]+O_{\varphi, p}\left(\frac{1}{N^2}\right).
\end{align*}
This is enough to complete the proof.
\end{proof}

\begin{coro}[Convergence of the expectation under assumptions of pe\-ri\-odi\-ci\-ty of the derivative, regularity, monotonicity and evenness]\label{lem4} Keep the assumptions of Lemma~\ref{lem3} and assume furthermore that 
$\varphi$ is monotonic increasing on $[0, 1/2]$ and monotonic decreasing on $[1/2, 1]$. Then, the sequence $\left(\mathcal{E}_N^{(p)}[\varphi]\right)_{N\ge 3}$ converges.
\end{coro}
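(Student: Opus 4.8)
The plan is to iterate the telescopic identity \eqref{telescopic1} of Lemma~\ref{lem3} and to recognise that the resulting series has nonnegative terms, so that its convergence is forced by an \emph{a priori} bound. Summing \eqref{telescopic1} over the running index from $4$ to $N$ and writing for brevity $\psi:=\tfrac12\,\varphi'-\varphi^*$, one obtains
\begin{equation*}
\mathcal{E}_N^{(p)}[\varphi]\;=\; \mathcal{E}_3^{(p)}[\varphi]+\sum_{k=4}^{N}\frac{1}{k}\cdot\mathcal{E}_{k-1}^{(p)}\!\left[\psi\right]+\sum_{k=4}^{N}O_{\varphi,p}\!\left(\frac{1}{k^{2}}\right).
\end{equation*}
The last sum converges absolutely as $N\to\infty$, so it is enough to prove that the partial sums $S_N:=\sum_{k=4}^{N}\tfrac1k\,\mathcal{E}_{k-1}^{(p)}[\psi]$ converge.

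The crucial point is that $\psi\ge 0$ on $[0,1]$ under the monotonicity hypothesis. Indeed, since $\varphi^*(x)=x\varphi'(x)$, one has the factorisation $\psi(x)=\left(\tfrac12-x\right)\varphi'(x)$. On $\left[0,\tfrac12\right]$ the factor $\tfrac12-x$ is nonnegative and, $\varphi$ being monotonic increasing there, so is $\varphi'$; on $\left[\tfrac12,1\right]$ both factors are nonpositive. In either case the product is nonnegative. Because $\mathcal{E}_{k-1}^{(p)}[\,\cdot\,]$ is, by \eqref{functJ}, an expectation against the probability weights $\left(g_{k-1}(n,p)\right)_n$, it follows that $\mathcal{E}_{k-1}^{(p)}[\psi]\ge 0$ for every $k\ge 4$, and hence that the sequence $(S_N)_{N\ge 4}$ is nondecreasing.

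It remains to bound $(S_N)$ from above, which is where the a priori estimate \eqref{normjnp} enters. Rearranging the displayed identity,
\begin{equation*}
S_N\;=\; \mathcal{E}_N^{(p)}[\varphi]-\mathcal{E}_3^{(p)}[\varphi]-\sum_{k=4}^{N}O_{\varphi,p}\!\left(\frac{1}{k^{2}}\right),
\end{equation*}
and each term on the right is bounded uniformly in $N$: the first by $\|\varphi\|_\infty$ thanks to \eqref{normjnp}, the second being a fixed constant, and the third by $\sum_{k\ge4}\left|O_{\varphi,p}(1/k^2)\right|<\infty$. Thus $(S_N)$ is a bounded, nondecreasing sequence and therefore converges; feeding this back into the identity shows that $\left(\mathcal{E}_N^{(p)}[\varphi]\right)_{N\ge3}$ converges, as claimed. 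The only substantive step is the sign analysis of $\psi$ — this is precisely where the monotonicity assumption is used; the rest is the routine combination of telescoping, the uniform bound \eqref{normjnp}, and the convergence of a bounded monotone sequence.
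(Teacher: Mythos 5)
Your proof is correct and follows essentially the same route as the paper's: both iterate the telescopic identity of Lemma~\ref{lem3}, use the factorisation $\tfrac12\varphi'-\varphi^*=(\tfrac12-x)\varphi'(x)\ge 0$ coming from the monotonicity hypothesis to get a series with nonnegative terms, and bound it via~\eqref{normjnp}. The only cosmetic difference is that you conclude by bounded monotone convergence where the paper phrases the final step through the Cauchy criterion.
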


\begin{proof}
From Lemma~\ref{lem3}, given any integers $L>M>3$, 
\begin{equation}\label{teleeq}
\sum_{N=M+1}^L\frac{1}{N}\cdotp \mathcal{E}_{N-1}^{(p)}\left[\frac{1}{2}\cdotp \varphi'-\varphi^*\right]\;=\; \mathcal{E}_L{(p)}[\varphi]\;-\;\mathcal{E}_{M}^{(p)}[\varphi]\; +\; O_{\varphi,p}\left(\frac{1}{M}\right),
\end{equation}
which implies in particular that the series $$\sum_{N\ge 4}\frac{1}{N}\cdotp \mathcal{E}_{N-1}^{(p)}\left[\frac{1}{2}\cdotp \varphi'-\varphi^*\right]$$ is bounded. Since by assumption, $$\left(\frac{1}{2}\cdotp \varphi'-\varphi^*\right)(x)\;=\; \left(\frac{1}{2}-x\right)\cdot \varphi'(x)\ge 0$$for any $x\in [0,1]$, its general term is positive. As a consequence, it converges, hence is Cauchy. Fix then $\varepsilon>0$ and an integer $N(\varepsilon)$ such that for all $L>M\ge N(\varepsilon)$, $$0\;\le\; \sum_{N=M+1}^L\frac{1}{N}\cdotp \mathcal{E}_{N-1}^{(p)}\left[\frac{1}{2}\cdotp \varphi'-\varphi^*\right]\;<\; \frac{\varepsilon}{2}\cdotp$$In view of relation~\eqref{teleeq}, even if it means increasing the value of $N(\varepsilon)$ to absorb the error term, this implies that for all $L>M\ge N(\varepsilon)$, $$\left|\mathcal{E}_L^{(p)}[\varphi]-\mathcal{E}_{M}^{(p)}[\varphi]\right|\;<\; \varepsilon.$$The sequence $\left(\mathcal{E}_N^{(p)}[\varphi]\right)_{N\ge 3}$, being Cauchy, is thus convergent.
\end{proof}

\noindent The same conclusion as in the previous corollary can be obtained  without imposing  boundary periodic conditions on the derivative~:

\begin{coro}[Convergence of the expectation under assumptions of regularity, monotonicity and evenness]\label{lem5}
Let $\varphi$ be a three times continuously differentiable function which is even about $1/2$. Assume that  $\varphi$ is monotonic increasing on $[0, 1/2]$ and monotonic decreasing on $[1/2, 1]$. Then, the sequence $\left(\mathcal{E}_N^{(p)}[\varphi]\right)_{N\ge 3}$ converges.

\end{coro}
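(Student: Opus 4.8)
The plan is to deduce Corollary~\ref{lem5} from Corollary~\ref{lem4} --- which is the same statement together with the boundary condition $\varphi'(0)=\varphi'(1)=0$ --- by approximating $\varphi$ uniformly by functions meeting that condition and then passing to the limit by means of the contraction estimate~\eqref{normjnp}. First I would normalise: replacing $\varphi$ by $\varphi+c$ merely shifts $\mathcal{E}_N^{(p)}[\varphi]$ by the constant $c$ and alters none of the hypotheses, so one may assume $\varphi(0)=\varphi(1)=0$, the relation $\varphi(1)=\varphi(0)$ being automatic from evenness about $1/2$.

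Next I would build the approximants. For each integer $k\ge5$, fix a $C^\infty$ cutoff $\rho_k\colon[0,1]\to[0,1]$ that is even about $1/2$, equals $1$ on $[0,1/k]\cup[1-1/k,1]$, vanishes on $[2/k,1-2/k]$, and is non-increasing on $[0,1/2]$ (for instance $\rho_k(x)=\chi(kx)+\chi(k(1-x))$, with $\chi$ a fixed smooth non-increasing function equal to $1$ on $[0,1]$ and to $0$ on $[2,\infty)$), and put $\varphi_k:=\varphi\cdot(1-\rho_k)$. Then $\varphi_k$ satisfies the hypotheses of Corollary~\ref{lem4}: it is three times continuously differentiable on $[0,1]$ and even about $1/2$, being a product of functions with those properties; it vanishes identically on $[0,1/k]\cup[1-1/k,1]$, so that $\varphi_k'(0)=\varphi_k'(1)=0$; and on $[0,1/2]$ one has
\[
\varphi_k'=\varphi'\cdot(1-\rho_k)-\varphi\cdot\rho_k'\ge0,
\]
because there $\varphi'\ge0$ and $\varphi\ge\varphi(0)=0$ by monotonicity of $\varphi$, while $1-\rho_k\ge0$ since $\rho_k\le1$ and $-\rho_k'\ge0$ since $\rho_k$ is non-increasing; monotonicity on $[1/2,1]$ then follows by evenness about $1/2$. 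Hence Corollary~\ref{lem4} gives that $\ell_k:=\lim_{N\to\infty}\mathcal{E}_N^{(p)}\!\left[\varphi_k\right]$ exists for every $k\ge 5$.

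Then I would pass to the limit. Since $\varphi$ is continuous with $\varphi(0)=\varphi(1)=0$ and $\rho_k$ is supported in $[0,2/k]\cup[1-2/k,1]$, one has $\left\|\varphi-\varphi_k\right\|_\infty=\left\|\varphi\cdot\rho_k\right\|_\infty\le\max\bigl\{\,|\varphi(x)|:x\in[0,2/k]\cup[1-2/k,1]\,\bigr\}\to0$ as $k\to\infty$. By linearity of $\mathcal{E}_N^{(p)}$ and~\eqref{normjnp}, $\bigl|\mathcal{E}_N^{(p)}[\varphi_j]-\mathcal{E}_N^{(p)}[\varphi_k]\bigr|\le\left\|\varphi_j-\varphi_k\right\|_\infty$ for every $N$, so letting $N\to\infty$ gives $|\ell_j-\ell_k|\le\left\|\varphi_j-\varphi_k\right\|_\infty$; thus $(\ell_k)_k$ is Cauchy and converges to some $\ell$. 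Finally, for all $N$ and $k$,
\[
\bigl|\mathcal{E}_N^{(p)}[\varphi]-\ell\bigr|\le\left\|\varphi-\varphi_k\right\|_\infty+\bigl|\mathcal{E}_N^{(p)}[\varphi_k]-\ell_k\bigr|+|\ell_k-\ell|,
\]
and choosing $k$ large, then $N$ large, makes the right-hand side arbitrarily small; hence $\mathcal{E}_N^{(p)}[\varphi]\to\ell$, which is the conclusion of the corollary.

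The delicate point, which I expect to be the main obstacle, is the construction of the $\varphi_k$: one must modify $\varphi$ near the endpoints $0$ and $1$ (which are identified on $\R/\Z$) so as to cancel the possibly nonzero boundary derivative $\varphi'(0)$ without destroying the monotonicity --- increasing on $[0,1/2]$, decreasing on $[1/2,1]$. Flattening $\varphi$ to the constant value $\varphi(0)$ on a shrinking neighbourhood of those endpoints does exactly this, precisely because $\varphi(0)$ is the minimal value of $\varphi$ there, and it costs only $o(1)$ in sup-norm by continuity of $\varphi$; the cruder global substitute $\varphi-\varphi'(0)\,x(1-x)$ would in general fail to remain monotone near $0$, which is why a localised cutoff must be used. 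Verifying that each $\varphi_k$ is genuinely monotone is therefore the step most prone to error; the remainder of the argument is soft.
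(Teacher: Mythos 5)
Your proof is correct and follows essentially the same route as the paper: the paper likewise normalises $\varphi(0)=\varphi(1)=0$, multiplies $\varphi$ by a smooth, even cutoff $\zeta_k$ (your $1-\rho_k$) vanishing to first order at the endpoints and equal to $1$ on $[1/k,1-1/k]$, applies Corollary~\ref{lem4} to the products, and passes to the limit via the bound~\eqref{normjnp}. The only differences are cosmetic --- you spell out the monotonicity check for $\varphi\cdot(1-\rho_k)$, which the paper leaves implicit, and you phrase the final step as a Cauchy argument on the limits $\ell_k$ rather than the paper's $\liminf$/$\limsup$ squeeze.
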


\begin{proof}
In view of the hypothesis of evenness, assume without loss of generality that $\varphi(0)=\varphi(1)=0$. By uniform continuity, there exists a function $\epsilon~: \eta>0\mapsto \epsilon(\eta)>0$ tending to zero at the origin such that for any $x,y\in [0,1]$, it holds that $\left|\varphi(x)-\varphi(y)\right|<\epsilon(\eta)$ whenever $\left|x-y\right|<\eta$.\\

\noindent Let then $\left(\zeta_k\right)_{k\ge 3}$ be any sequence of three times continuously differentible maps over $[0,1]$ such that for all $k\ge 3$, 
\begin{itemize}
\item $\zeta_k(0)=\zeta_k(1)=0$ and $\zeta'_k(0)=\zeta'_k(1)=0$;
\item $\zeta_k$ is monotonic increasing on $[0,1/2]$ and monotonic decreasing on $[1/2,1]$ ;
\item $\zeta_k$ is even about $1/2$ and $\zeta_k(x)=1$ for any $x\in \left[1/k, 1-1/k\right]$.
\end{itemize}
From Corollary~\ref{lem4}, the sequence $\left(\mathcal{E}_N^{(p)}[\zeta_k\varphi]\right)_{N\ge 3}$ is convergent, say to a real $\rho_k$, which is bounded in absolute value by $\left\|\varphi\right\|_{\infty}$ (since  so is  each term $\mathcal{E}_N^{(p)}[\zeta_k\varphi]$).\\

\noindent By the construction of the map $\zeta_k$ for any given $k\ge 3$ and by the definition of the expectation $\mathcal{E}_N^{(p)}$ (see~\eqref{functJ}), it then follows that $$\left|\mathcal{E}_N^{(p)}[\varphi]- \mathcal{E}_N^{(p)}[\zeta_k\varphi]\right|\;\le\;\left\|\varphi- \zeta_k\varphi\right\|_{\infty}\;<\; \epsilon\left(\frac{1}{k}\right),$$thence the inequalities
\begin{equation*}
\rho_k-\epsilon\left(\frac{1}{k}\right)\;\le\; \liminf_{N\rightarrow\infty}\mathcal{E}_N^{(p)}[\varphi]\;\le\; \limsup_{N\rightarrow\infty}\mathcal{E}_N^{(p)}[\varphi]\;\le\; \rho_k+\epsilon\left(\frac{1}{k}\right).
\end{equation*}
In particular, 
\begin{equation*}
0\;\le\;\limsup_{N\rightarrow\infty}\mathcal{E}_N^{(p)}[\varphi]- \liminf_{N\rightarrow\infty}\mathcal{E}_N^{(p)}[\varphi]\;\le\; 2\cdot\epsilon\!\left(\frac{1}{k}\right).
\end{equation*}
Upon letting $k$ tend to infinity, this shows that the sequence $\left(\mathcal{E}_N^{(p)}[\varphi]\right)_{N\ge 3}$  converges and thus completes the proof.
\end{proof}

\noindent The convergence of the expectation~\eqref{functJ} evaluated at odd (Lemma~\ref{lem2}) and even (Corollary~\ref{lem5}) functions about $1/2$ under regularity and monoticity assumptions can be ge\-ne\-ra\-lised to the case of any continuous function by a density argument. This is the content of the (proof of the) next statement.

\begin{lem}[Convergence in law of the random variables.]\label{lem6}
In the notations of Theorem~\ref{mainthm}, the sequence of random variables  $\left(X_N^{(p)}\right)_{N\ge 3}$ converges in law to a random variable taking values in $\R/\Z$.
\end{lem}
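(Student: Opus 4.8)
The plan is to upgrade the partial information already obtained into the convergence of the sequence $\bigl(\mathcal{E}_N^{(p)}[\varphi]\bigr)_{N\ge 3}$ for \emph{every} continuous function $\varphi$ on $\R/\Z$. Once this is available, the functional $T\colon\varphi\mapsto\lim_{N\to\infty}\mathcal{E}_N^{(p)}[\varphi]$ is linear, satisfies $|T(\varphi)|\le\|\varphi\|_\infty$ by~\eqref{normjnp}, is positive (each $\mathcal{E}_N^{(p)}[\varphi]$ being a nonnegative combination of the values of $\varphi$ by~\eqref{functJ} whenever $\varphi\ge 0$), and fixes the constant function $1$; the Riesz representation theorem on the compact space $\R/\Z$ then identifies $T$ with integration against a Borel probability measure $\mu$, and a random variable with law $\mu$ is the sought distributional limit. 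Equivalently, the laws of the $X_N^{(p)}$ all lie in the weak-$*$ compact set of probability measures on the circle, and the convergence of every integral $\mathcal{E}_N^{(p)}[\varphi]$ forces all subsequential limits to coincide.

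To treat a general continuous $\varphi$, decompose it into its parts even and odd about $1/2$, namely $\varphi=\varphi_{\mathrm e}+\varphi_{\mathrm o}$ with $\varphi_{\mathrm e}(x)=\tfrac12\bigl(\varphi(x)+\varphi(1-x)\bigr)$ and $\varphi_{\mathrm o}(x)=\tfrac12\bigl(\varphi(x)-\varphi(1-x)\bigr)$; both remain continuous on $\R/\Z$, so by linearity of~\eqref{functJ} it suffices to deal with the two pieces separately. For the odd piece, note that a periodic function odd about $1/2$ automatically vanishes at $0$ and at $1/2$. Approximating $\varphi_{\mathrm o}$ uniformly by a smooth function that is still odd about $1/2$ (e.g.\ via Fej\'er summation, or convolution with a symmetric kernel, since the reflection $x\mapsto 1-x$ commutes with such operations) and applying Lemma~\ref{lem2} to $\pm$ the approximant, whose expectation therefore tends to $0$ two-sidedly, one concludes through~\eqref{normjnp} by a standard three-$\varepsilon$ argument that $\mathcal{E}_N^{(p)}[\varphi_{\mathrm o}]\to 0$.

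The even piece is the substantive case, and the difficulty is that Corollary~\ref{lem5} delivers convergence only for members of the convex \emph{cone} $\mathcal{K}$ of $C^3$ functions that are even about $1/2$, increasing on $[0,1/2]$ and decreasing on $[1/2,1]$, rather than for a linear space. The way around this is to express a smooth even function as a difference of two elements of $\mathcal{K}$. First approximate $\varphi_{\mathrm e}$ uniformly by a smooth function $\psi$ even about $1/2$ (again by Fej\'er summation, which preserves this symmetry); being even about $1/2$ and periodic, $\psi$ has $\psi'(0)=\psi'(1/2)=0$. Now fix the model function $\Omega(x)=1-\cos(2\pi x)\in\mathcal{K}$, whose derivative $\Omega'(x)=2\pi\sin(2\pi x)$ is nonnegative on $[0,1/2]$, vanishes there only at the two endpoints and to first order, and is bounded below by a positive constant on every compact subinterval of $(0,1/2)$. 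Comparing Taylor expansions of $\psi'$ and $\Omega'$ at $0$ and $1/2$, and using that lower bound in the interior, one sees that $\psi+\lambda\Omega$ is increasing on $[0,1/2]$ for all sufficiently large $\lambda$ (while $\lambda\Omega$ is, for every $\lambda>0$); being also $C^\infty$ and even about $1/2$, both $\lambda\Omega$ and $\psi+\lambda\Omega$ then lie in $\mathcal{K}$ (and are decreasing on $[1/2,1]$ by evenness). Hence $\psi=(\psi+\lambda\Omega)-\lambda\Omega$ has convergent expectation by Corollary~\ref{lem5} and linearity, and letting the approximation parameter tend to $0$ (with~\eqref{normjnp} controlling the discrepancy uniformly in $N$) shows that $\bigl(\mathcal{E}_N^{(p)}[\varphi_{\mathrm e}]\bigr)_{N\ge 3}$ is Cauchy, hence convergent.

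Combining the two steps gives the convergence of $\mathcal{E}_N^{(p)}[\varphi]$ for all $\varphi\in C^0(\R/\Z)$, and the representation argument of the first paragraph concludes. The principal obstacle is precisely the even case just described: one must exploit the fact that Corollary~\ref{lem5} imposes no periodicity constraint on the derivative in order to realise an arbitrary smooth even bump as a difference of two unimodal ones, and check that the auxiliary function $\Omega$ remains inside the admissible cone $\mathcal{K}$ after the perturbation by $\lambda^{-1}\psi$. The smoothing steps, the automatic vanishing of odd functions at $0$ and $1/2$, and the closing representation-theorem argument are all routine.
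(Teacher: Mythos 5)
Your argument is correct and follows essentially the same route as the paper: reduce to convergence of $\mathcal{E}_N^{(p)}[\varphi]$ for every continuous test function, split $\varphi$ into its odd and even parts about $1/2$, kill the odd part via Lemma~\ref{lem2}, handle the even part by uniform approximation combined with Corollary~\ref{lem5}, and conclude by compactness of the set of probability measures on the circle. The only differences are cosmetic --- you smooth by Fej\'er summation and realise the even approximant as a difference of two unimodal functions using $1-\cos(2\pi x)$, where the paper uses Weierstrass approximation in the span of the $\varphi_{2k}$ and leaves the analogous monotonicity reduction implicit --- and if anything your version spells out a step the paper dismisses as ``easily seen.''
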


\begin{proof}
It suffices to prove that for any function $\varphi\in C^0([0,1])$ such that $\varphi(0)=\varphi(1)$ (recall that $\varphi$ is then identified with a continuous function on the unit circle $\R/\Z$), the sequence $\left(\mathcal{E}_N^{(p)}[\varphi]\right)_{N\ge 3}$ converges. Indeed, under this assumption, since the unit circle is compact, the Prohorov Theorem (see~\cite[Theorem~5.1]{bili}) implies that the sequence (of the distributions) of the random variables $\left(X_N^{(p)}\right)_{N\ge 3}$ is relatively compact~: any subsequence contains a convergent one (in law). If the limits of all such subsequences are identically distributed, then the lemma holds. Otherwise, let $\Xi_1$ and $\Xi_2$ be partial limits with nonequal laws. From~\cite[Theorem~1.2]{bili} for instance, there exists a continuous function $f$ over $\R/\Z$ such that  $\mathbb{E}^{(p)}\!\left[f\left(\Xi_1\right)\right]\neq \mathbb{E}^{(p)}\!\left[f\left(\Xi_2\right)\right]$. This nevertheless contradicts the convergence  of the sequence $\left(\mathbb{E}^{(p)}\!\left[f\left(X_N^{(p)}\right)\right]\right)_{N\ge 3}$.


\noindent To establish the lemma, it thus remains to prove that the sequence $\left(\mathcal{E}_N^{(p)}[\varphi]\right)_{N\ge 3}$ indeed converges. To this end, define the auxiliary functions $$\varphi_O~: x\mapsto\frac{\varphi(x)-\varphi(1-x)}{2}\qquad \textrm{and}\qquad \varphi_E~: x\mapsto\frac{\varphi(x)+\varphi(1-x)}{2}\cdotp$$They respectively  represent the odd and even parts of the function $\varphi$ in the sense that they are respectively odd and even about $1/2$ and that $\varphi$ decomposes as $\varphi= \varphi_O+\varphi_E$. From Lemma~\ref{lem3}, it holds that $$\mathcal{E}_N^{(p)}[\varphi]\;=\; \mathcal{E}_N^{(p)}[\varphi_O]+\mathcal{E}_N^{(p)}[\varphi_E]\;=\; \mathcal{E}_N^{(p)}[\varphi_E]+O_{\varphi, p}\left(\frac{1}{N}\right)$$in such a way that it suffices to prove the convergence of the sequence of even parts $\left(\mathcal{E}_N^{(p)}[\varphi_E]\right)_{N\ge 3}$.\\

\noindent To see this, given an integer $k\ge 0$, consider the polynomial map 
\begin{equation}\label{defpolymap}
\varphi_k~: x\in [0,1]\;\mapsto\; \left(\frac{1}{2}-x\right)^k.
\end{equation}
From the Weierstrass approximation theorem,  given any $\varepsilon>0$, there exists a function $\psi_{\varepsilon}$ lying in the real span of the family $\left\{\varphi_k\right\}_{k\ge 0}$ such that $$\left\|\varphi-\psi_{\varepsilon}\right\|_{\infty}\;<\;\frac{\varepsilon}{2}\cdotp$$By the definition of $\varphi_E$, this  implies that 
\begin{equation}\label{inesupeven}
\left\|\varphi_E-\psi_{\varepsilon, E}\right\|_{\infty}\;<\; \varepsilon,
\end{equation}
where $$\psi_{\varepsilon, E}~: x\;\mapsto\; \frac{\psi_{\varepsilon}(x)+\psi_{\varepsilon}(1-x)}{2}$$is the even part (about $1/2$) of $\psi_{\varepsilon}$ and lies as such in the real span of the family of even functions (about $1/2$) $\left\{\varphi_{2k}\right\}_{k\ge 0}$. Corollary~\ref{lem5} is then easily seen to imply the existence of the limit $$\sigma(\varepsilon)\;=\; \lim_{N\rightarrow\infty}\, \mathcal{E}_N^{(p)}[\psi_{\varepsilon, E}].$$Since the set of limit values $\left\{\sigma(\varepsilon)\right\}_{\varepsilon\in (0,1)}$ is clearly bounded, upon setting $\varepsilon=1/k$ and even if it means considering a subsequence, it may be assumed that the sequence $\left(\sigma(1/k)\right)_{k\ge 0}$ converges to a limit $\sigma\ge 0$. Then, 
\begin{align*}
\left|\mathcal{E}_N^{(p)}[\varphi_E]-\sigma\right|\;\le\; \left|\mathcal{E}_N^{(p)}[\varphi_E]-\mathcal{E}_N^{(p)}[\psi_{1/k, E}]\right|+ \left|\mathcal{E}_N^{(p)}[\psi_{1/k, E}]-\sigma(1/k)\right|+\left|\sigma(1/k)-\sigma\right|.
\end{align*}
Since, from inequality~\eqref{normjnp}, the expectation $\mathcal{E}_N^{(p)}$ has a norm bounded by 1, it follows from~\eqref{inesupeven} that the first term on the right-hand can be made arbitrarily small for $k$ large enough. So can the other two terms by the definition of a limit. The right--hand side thus becomes less than any arbitrarily fixed quantity provided that $k$ is large enough~: this shows that the sequence $\left(\mathcal{E}_N^{(p)}[\varphi_E]\right)_{N\ge 3}$ converges to the real $\sigma$.
\end{proof}

\noindent In order to establish  the first point in Theorem~\ref{mainthm}, it remains to to show that the distributional limit $X^{(p)}$ of the sequence $\left(X_N^{(p)}\right)_{N\ge 3}$,  the existence of which is guaranteed by the above Lemma~\ref{lem6}, takes values in the set $\left\{0, 1/2\right\}$.

\begin{proof}[Completion of the proof of Claim (1) in Theorem~\ref{mainthm}]
Upon setting $$\varphi~: x\in [0,1]\mapsto -\cos(2\pi x)\qquad \textrm{and}\qquad\psi~: x\in [0,1]\mapsto 2\pi\cdot \left(\frac{1}{2}-x\right)\cdot \sin(2\pi x),$$ Lemma~\ref{lem3}  yields that for all integers $L>M\ge 3$,  $$\mathcal{E}_L^{(p)}[\varphi]- \mathcal{E}_M^{(p)}[\varphi]\;=\; \sum_{M<N<L}\frac{1}{N}\cdot \mathcal{E}_{N-1}^{(p)}[\psi]\; +\; O_{p}\left(\frac{1}{M}\right)\cdot$$Since, from Lemma~\ref{lem6}, the sequence $\left(\mathcal{E}_N^{(p)}[\varphi]\right)_{N\ge 3}$ converges, hence is Cauchy, the above identity is easily seen to imply that the sequence of partial sums of the series $$\sum_{N\ge 3}\frac{1}{N}\cdot \mathcal{E}_{N-1}^{(p)}[\psi]$$is also Cauchy, hence converges. \sloppy Given that, from Lemma~\ref{lem6} once again, the sequence $\left(\mathcal{E}_N^{(p)}[\psi]\right)_{N\ge 3}$ is also convergent, one thus deduces that $$\lim_{N\rightarrow\infty}\;\mathcal{E}_N^{(p)}[\psi]\;=\; \mathbb{E}\!\left[\psi\left(X^{(p)}\right)\right]\;=\;0.$$


\noindent Since $\psi$ in nonnegative, this implies that $\psi\left(X^{(p)}\right)=0$ almost surely. As a consequence, $X^{(p)}$ takes values in the null set of the map $\psi$, namely $\left\{0, 1/2\right\}$. This concludes the proof of  Claim (1) in Theorem~\ref{mainthm}.
\end{proof}

\subsection{Existence of the Limit Position in the Middle Range $p\in \left(1/3,2/3\right)$}

Throughout this section, the probability parameter $p$ is assumed to lie in the open interval $\left(1/3,2/3\right)$. Under this assumption, the convergence in law of the sequence of random variables $\left(X_N^{(p)}\right)_{N\ge 3}$ to a Bernoulli random variable $X^{(p)}$ taking values in the set $\left\{0,1/2\right\}$  can be made more precise~: the random variable $X^{(p)}$  takes the almost sure value  $1/2$. This is the content of Point 2 in Theorem~\ref{mainthm} which is established hereafter. The key new ingredient allowing one to gather more information on the limit distribution is the following result implying that an exponential decay of the probability of survival occurs near the starting position.

\begin{prop}[Exponential bounds for the probabilities of survival]\label{keypropopmidle}
Assume that $p\in (1/3, 2/3)$. Then, there exist constants $K>0$ and $\beta, \gamma>1$, all depending only on $p$, such that for all integers $N\ge 3$ and $n\in \Z$, $$g_N(n,p)\;\le\; K\cdot \frac{\beta^{\left\langle n\right\rangle_N}}{\gamma^N}\cdotp$$Here, $\left\langle n\right\rangle_N=\textrm{dist}\left(n , N\Z\right)$.
\end{prop}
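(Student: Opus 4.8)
The plan is to establish, by one strong induction on $N$, the bound $g_{N}(n,p)\le K\beta^{\langle n\rangle_{N}}\gamma^{-N}$ for all $n\in\Z$ — it suffices to treat $n\in\{0,\dots,N-1\}$, since both sides depend only on $n$ modulo $N$. Write $m=\langle n\rangle_{N}\in\{0,1,\dots,\lfloor N/2\rfloor\}$, so $n\equiv m$ or $n\equiv -m\pmod N$, and set $q=\max(p,1-p)$; the hypothesis $p\in(1/3,2/3)$ enters \emph{only} through $q<2/3$. The constants are chosen in the order $\beta$, then $\gamma$, then $K$. The decisive point is that $\beta\mapsto q\beta^{3}+(1-q)-\beta^{2}$ vanishes at $\beta=1$ with derivative $3q-2<0$ there, so $q\beta^{3}+(1-q)<\beta^{2}$ for every $\beta$ slightly exceeding $1$; for any such $\beta$ the interval $\bigl(1,\beta^{2}/(q\beta^{3}+1-q)\bigr)$ is nonempty. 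I will fix $\beta>1$ close enough to $1$, then $\gamma>1$ close enough to $1$, so that simultaneously
\[
\gamma\bigl(q\beta^{3}+1-q\bigr)\le\beta^{2},\quad \gamma^{3}\bigl(p^{2}\beta^{3}+p(1-p)\bigr)\le 1,\quad \gamma^{2}\bigl(p^{2}\beta^{3}+p(1-p)\bigr)\le\beta,\quad \gamma^{2}\bigl(p(1-p)\beta^{3}+(1-p)^{2}\bigr)\le\beta .
\]
This is possible because at $(\beta,\gamma)=(1,1)$ the left-hand sides equal $1$, $p$, $p$ and $1-p$, each at most (and, for all but the first, strictly below) the corresponding right-hand side, so by continuity all four persist for $(\beta,\gamma)$ in a suitable one-sided neighbourhood of $(1,1)$, the first being secured by the choice of $\beta$ just described.

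With $\beta$ and $\gamma$ fixed, pick $N_{0}$ small but large enough that the unfoldings below are legitimate, and take $K\ge\gamma^{N_{0}}$. For the base cases $3\le N\le N_{0}$ the claim is immediate: $g_{N}(n,p)\le 1\le K\gamma^{-N}\le K\beta^{\langle n\rangle_{N}}\gamma^{-N}$. For the inductive step at $N>N_{0}$, split on $m$. If $m\ge 2$ then $n\bmod N\in\{2,\dots,N-2\}$, the ``otherwise'' line of Proposition~\ref{recurrel} applies, and it writes $g_{N}(n,p)$ as $p\,g_{N-1}(m-2,p)+(1-p)\,g_{N-1}(N-m-2,p)$ or $p\,g_{N-1}(N-m-2,p)+(1-p)\,g_{N-1}(m-2,p)$, according as $n\equiv m$ or $n\equiv -m$. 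Since $\langle m-2\rangle_{N-1}\le m-2$ and $\langle N-m-2\rangle_{N-1}=\min(N-m-2,\,m+1)\le m+1$, the induction hypothesis at level $N-1$ together with the elementary bounds $p+(1-p)\beta^{3}\le q\beta^{3}+1-q$ and $(1-p)+p\beta^{3}\le q\beta^{3}+1-q$ gives $g_{N}(n,p)\le K\gamma^{-(N-1)}\beta^{m-2}\bigl(q\beta^{3}+1-q\bigr)$, which is $\le K\beta^{m}\gamma^{-N}$ exactly by the first of the four displayed inequalities.

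The cases $m=0$ and $m=1$ use the special lines of Proposition~\ref{recurrel}, and a one-step comparison would be fatal (for instance $g_{N}(0,p)=g_{N-1}(-1,p)$ would demand $\beta\gamma\le 1$); the remedy is to unfold the recursion two or three steps until a genuinely ``smaller'' configuration appears, tracking the reductions successively modulo $N$, then $N-1$, then $N-2$, and (when $m=0$) also $N-3$. This yields, for $N>N_{0}$,
\[
g_{N}(0,p)=p^{2}g_{N-3}(-3,p)+p(1-p)g_{N-3}(0,p),
\]
\[
g_{N}(1,p)=p(1-p)g_{N-2}(-3,p)+(1-p)^{2}g_{N-2}(0,p),\qquad g_{N}(-1,p)=p^{2}g_{N-2}(-3,p)+p(1-p)g_{N-2}(0,p).
\]
Feeding in the induction hypothesis at level $N-3$ (resp.\ $N-2$), together with $\langle 0\rangle_{\,\cdot}=0$ and $\langle -3\rangle_{N-3}\le 3$, $\langle -3\rangle_{N-2}\le 3$, reduces the step for these positions precisely to the second, third and fourth of the four displayed inequalities. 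This closes the induction and proves the proposition.

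I expect the main obstacle to be exactly the positions $n\equiv 0,\pm1\pmod N$: one must confirm that a single pair $(\beta,\gamma)$ — pinned very near $(1,1)$ both by the generic case $m\ge2$ and by $q<2/3$ — can simultaneously meet the several multi-step inequalities coming from the special recursions, and one must perform the bookkeeping of the iterated modular reductions $N\to N-1\to N-2\to N-3$ without slips. Everything else is routine algebra with $\beta$ and $\gamma$ close to $1$.
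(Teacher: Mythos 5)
Your proof is correct and follows essentially the same route as the paper: a strong induction on $N$ via the recursion of Proposition~\ref{recurrel}, with $\beta,\gamma>1$ chosen near $1$ so that the hypothesis $1/3<p<2/3$ (equivalently $q=\max(p,1-p)<2/3$) makes the key inequality $\gamma\bigl(q\beta+(1-q)\beta^{-2}\bigr)\le 1$ attainable. The only differences are cosmetic — you unify the paper's three-way range split through the bound $\langle N-m-2\rangle_{N-1}\le m+1$ and unfold the recursion one step further than necessary at $n\equiv 0,\pm1$ (the paper gets by with the single conditions $p\beta^2\gamma^2\le1$ and $p\beta\gamma,(1-p)\beta\gamma\le1$ there) — and both versions close the induction.
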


\begin{proof}
The claimed result is established by an induction making it possible to provide effective values for the various parameters. \\

\noindent First, choose $K>0$ large enough so that the conclusion of the proposition holds for the finitely many probabilities $g_N(n,p)$ determined by the finitely many values of $N\le 7$ and $n \pmod{N}$. Let then $N\ge 8$ be such that this conclusion holds for all integers $L\in\left\llbracket 3, N-1\right\rrbracket$ and $n\pmod{L}$. Consider the following distinction of cases relying on the recursion relations for the probabilities of survival stated in Proposition~\ref{recurrel}~:

\begin{itemize}
\item[$\bullet$] when $n\equiv 0\pmod{N}$, \begin{align*}g_N(0,p)\;&=\; g_{N-1}(-1, p)\;=\; p\cdot g_{N-2}(-2, p)\\
&\le\;  K\cdot \frac{\beta^2}{\gamma^{N-2}}\;\le\; K\cdot \frac{1}{\gamma^N},
\end{align*}
where the last inequality is valid for any choice of $\beta, \gamma>1$ such that $p(\gamma\beta)^2\le 1$;

\item[$\bullet$] when $n\equiv 1\pmod{N}$, \begin{align*}g_N(1,p)\;&=\;(1-p)\cdot g_{N-1}(-2, p)\\
&\le\; (1-p)\cdot K\cdot \frac{\beta^2}{\gamma^{N-1}}\;=\; K\cdot \frac{\beta}{\gamma^N}\cdot \left((1-p)\gamma\beta\right)\;\le\; K\cdot \frac{1}{\gamma^N},\end{align*}
\sloppy where the last inequality is valid for any choice of $\beta, \gamma>1$ such that $(1-p)\gamma\beta\le 1$;

\item[$\bullet$] when $n\equiv -1\pmod{N}$, 
\begin{align*}g_N(1,p)\;&=\;p\cdot g_{N-1}(-2, p)\\
&\le\; p\cdot K\cdot \frac{\beta^2}{\gamma^{N-1}}\;=\; K\cdot \frac{\beta}{\gamma^N}\cdot \left(p\gamma\beta\right)\;\le\; K\cdot \frac{1}{\gamma^N},
\end{align*}
where the last inequality is valid for any choice of $\beta, \gamma>1$ such that $p\gamma\beta\le 1$;

\item[$\bullet$] \sloppy when $n\in \left\llbracket 2,  (N-3)/2\right\rrbracket\pmod{N}$ (then, $\left\langle n-2\right\rangle_{N-1}=n-2$ and $\left\langle N-n-2\right\rangle_{N-1}=n+1$),
\begin{align*} 
g_N(n,p)\;&=\;p\cdot g_{N-1}(n-2, p)+(1-p)\cdot g_{N-1}(N-n-2, p)\\
&\le\; p\cdot K\cdot \frac{\beta^{n-2}}{\gamma^{N-1}}+(1-p)\cdot K\cdot \frac{\beta^{n+1}}{\gamma^{N-1}}\;=\; K\cdot \frac{\beta^n}{\gamma^N}\cdot \left(p\cdot\frac{\gamma}{\beta^2}+(1-p)\cdot \gamma \beta\right)\\
&\le\; K\cdot \frac{\beta^{\left\langle n \right\rangle_{N}}}{\gamma^N},
\end{align*}
where the last inequality is valid for any choice of $\beta, \gamma>1$ such that $\gamma\cdot (p/\beta^2+(1-p)\beta)\le 1$, which can be realised under the assumption that $p>1/3$;

\item[$\bullet$] when $n\in \left\llbracket (N+3)/2, N-2\right\rrbracket\pmod{N}$ (then, $\left\langle n-2\right\rangle_{N-1}=N-n+1$ and $\left\langle N-n-2\right\rangle_{N-1}=N-n-2$),
\begin{align*} 
g_N(n,p)\;&=\;p\cdot g_{N-1}(n-2, p)+(1-p)\cdot g_{N-1}(N-n-2, p)\\
&\le\; p\cdot K\cdot \frac{\beta^{N-n+1}}{\gamma^{N-1}}+(1-p)\cdot K\cdot \frac{\beta^{N-n-2}}{\gamma^{N-1}}\\
&=\; K\cdot \frac{\beta^{N-n}}{\gamma^N}\cdot \left(p\gamma\beta+(1-p)\cdot \frac{\gamma}{ \beta^2}\right)\;\le\; K\cdot \frac{\beta^{\left\langle n \right\rangle_{N}}}{\gamma^N},
\end{align*}
where the last inequality is valid for any choice of $\beta, \gamma>1$ such that $\gamma\cdot (p\beta+(1-p)/\beta^2)\le 1$, which can be realised under the assumption that $p<2/3$;

\item[$\bullet$] when $n\in \left\llbracket (N-3)/2, (N+3)/2\right\rrbracket\pmod{N}$ (then, $\left\langle n-2\right\rangle_{N-1}=n-2$ and $\left\langle N-n-2\right\rangle_{N-1}=N-n-2$),
\begin{align*} 
g_N(n,p)\;&=\;p\cdot g_{N-1}(n-2, p)+(1-p)\cdot g_{N-1}(N-n-2, p)\\
&\le\; p\cdot K\cdot \frac{\beta^{n-2}}{\gamma^{N-1}}+(1-p)\cdot K\cdot \frac{\beta^{N-n-2}}{\gamma^{N-1}}\\
&=
\begin{cases}
\; K\cdot \frac{\beta^{n}}{\gamma^N}\cdot \left(p\frac{\gamma}{\beta^2}+(1-p)\cdot \beta^{N-2n}\frac{\gamma}{ \beta^2}\right) &\textrm{ if } (N-3)/2\le n\le N/2;\\
\; K\cdot \frac{\beta^{N-n}}{\gamma^N}\cdot \left(p\cdot \beta^{2n-N}\frac{\gamma}{\beta^2}+(1-p)\frac{\gamma}{ \beta^2}\right)&\textrm{ if } N/2\le n\le (N+3)/2;
\end{cases}\\
&\le 
\begin{cases}
\; K\cdot \frac{\beta^{n}}{\gamma^N}\cdot \left(p\frac{\gamma}{\beta^2}+(1-p)\cdot \beta \gamma\right) &\textrm{ if } (N-3)/2\le n\le N/2;\\
\; K\cdot \frac{\beta^{N-n}}{\gamma^N}\cdot \left(p\cdot \beta \gamma+(1-p)\frac{\gamma}{ \beta^2}\right)&\textrm{ if } N/2\le n\le (N+3)/2;
\end{cases}\\
& \le\; K\cdot \frac{\beta^{\left\langle n \right\rangle_{N}}}{\gamma^N},
\end{align*}
where the last inequality is valid for any choice of $\beta, \gamma>1$ such that $\gamma\cdot \max\left\{p\beta+(1-p)/\beta^2, p/\beta^2+(1-p)\beta\right\}\le 1$, which can be realised under the assumption that $1/3<p<2/3$.
\end{itemize}
Since, under the assumption that $1/3<p<2/3$, all four inequalities $p\beta^2\gamma^2\le 1$, $(1-p)\beta\gamma\le 1$, $\gamma\cdot (p\beta+(1-p)/\beta^2)\le 1$ and $\gamma\cdot ((1-p)\beta+p/\beta^2)\le 1$ can be met simultaneously, the proof of the proposition is complete.
\end{proof}

\noindent The following two results are derived from the above statement. They should be seen as variants of~Lemmata~\ref{lem2} and~\ref{lem3} proved in the previous section in this sense~: they enable one to retrieve the respective conclusions of those lemmata upon replacing the boundary condition assumptions they include with the assumption that the parameter $p$ lies in the middle interval $(1/3, 2/3)$.

\begin{lem}[Decay of the expectation under assumptions of regularity, oddness and localisation of the parameter $p$]\label{lem2bis}
Let $N\ge 4$ be an integer and let $\varphi$ be a twice continuously differentiable function over $[0,1]$. Let furthermore $p$ be a parameter lying in the interval $\left(1/3, 2/3\right)$. Assume that the function $\varphi$ is odd about 1/2. Then, the conclusion of Lemma~\ref{lem2} still holds. 
\end{lem}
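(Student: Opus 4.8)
The plan is to rerun the proof of Lemma~\ref{lem2}, the sole place where the boundary hypothesis $\varphi(0)=\varphi(1)$ intervened there being the elimination of the term $O\!\left(\eta_N(p)\cdot\max\left\{|\varphi(0)|,|\varphi(1)|,|\varphi'(0)|/N,|\varphi'(1)|/N\right\}\right)$ produced by Lemma~\ref{lem1}. Here that role is taken over by Proposition~\ref{keypropopmidle}: since $p\in(1/3,2/3)$, it supplies constants $K>0$ and $\beta,\gamma>1$, depending only on $p$, with $g_N(n,p)\le K\beta^{\langle n\rangle_N}/\gamma^N$ for all $N\ge 3$ and $n\in\Z$; in particular $\eta_N(p)\le K\beta^2/\gamma^N$ decays geometrically. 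Consequently the error term displayed above is $O_{\varphi,p}(\gamma^{-N})$, hence a fortiori $O_{\varphi,p}(1/N)$, without any periodicity assumption on $\varphi$.

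Granting this, I would feed $\varphi$ into Lemma~\ref{lem1} using only that it is odd about $1/2$, which forces $\varphi'$ to be even about $1/2$. Then $\varphi(1-n/(N-1))=-\varphi(n/(N-1))$ and $\varphi'(1-n/(N-1))=\varphi'(n/(N-1))$ for all $n$, so the zeroth-order main term of Lemma~\ref{lem1} collapses to $(2p-1)\,\mathcal{E}_{N-1}^{(p)}[\varphi]$ while the first-order term collapses to $(2p-1)\sum_{n=0}^{N-2}g_{N-1}(n,p)\big(\tfrac2N-\tfrac{n}{N(N-1)}\big)\varphi'\!\big(\tfrac{n}{N-1}\big)$; the bracket being $O(1/N)$ uniformly for $n\in\left\llbracket 0,N-2\right\rrbracket$ and the $g_{N-1}(\cdot,p)$ summing to $1$, this last term is $O_\varphi(1/N)$. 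Putting the pieces together yields, for all $N>3$, the recursion
\begin{equation*}
\left|\mathcal{E}_N^{(p)}[\varphi]\right|\;\le\;|2p-1|\cdot\left|\mathcal{E}_{N-1}^{(p)}[\varphi]\right|+\frac{C'(\varphi,p)}{N}
\end{equation*}
for an appropriate constant $C'(\varphi,p)>0$, that is, exactly inequality~\eqref{lemtechle2}.

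From here the argument is word for word that of the end of the proof of Lemma~\ref{lem2}: as $p\in(1/3,2/3)$ gives $|2p-1|<1/3<1$, induction on the recursion above gives, for every $k\in\left\llbracket 1,N-3\right\rrbracket$, the bound $\left|\mathcal{E}_N^{(p)}[\varphi]\right|\le|2p-1|^k\|\varphi\|_\infty+C'(\varphi,p)\sum_{l=0}^{k-1}|2p-1|^l/(N-l)$, where $\|\varphi\|_\infty$ bounds $|\mathcal{E}_{N-k}^{(p)}[\varphi]|$ thanks to~\eqref{normjnp}; choosing $k$ to be the integer in $(N/2-1,N/2]$ bounds the right-hand side by $|2p-1|^{N/2-1}\|\varphi\|_\infty+C'(\varphi,p)\big/\big((N/2+1)(1-|2p-1|)\big)=O_{\varphi,p}(1/N)$, which is the conclusion of Lemma~\ref{lem2}.

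The only substantive step is the first paragraph: the exponential bound of Proposition~\ref{keypropopmidle} is precisely what makes the quantity $\eta_N(p)$ negligible when $p$ lies in the middle range, and this single observation renders the boundary hypothesis of Lemma~\ref{lem2} unnecessary. The remaining steps merely replay computations already performed, so I do not anticipate any real obstacle.
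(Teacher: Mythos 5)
Your proposal is correct and follows essentially the same route as the paper: the exponential decay of $\eta_N(p)$ supplied by Proposition~\ref{keypropopmidle} makes the boundary term from Lemma~\ref{lem1} an $O_{\varphi,p}(\gamma^{-N})=O_{\varphi,p}(1/N)$, so that inequality~\eqref{lemtechle2} survives without the hypothesis $\varphi(0)=\varphi(1)$, after which the proof of Lemma~\ref{lem2} applies verbatim. The paper states exactly this in two lines; your version merely spells out the collapse of the main and first-order terms under oddness, which is a correct and welcome elaboration.
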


\begin{proof}
The exponential decay of the sequence $\left(\eta_N(p)\right)_{N\ge 3}$ defined from~\eqref{defeta}, which is ensured by Proposition~\ref{keypropopmidle}, implies that the inequality~\eqref{lemtechle2} still holds. The proof of Lemma~\ref{lem2} from that point on is therefore still valid.
\end{proof}

\begin{lem}[Telescopic asymptotic of the expectation under assumptions of regularity, evenness and localisation of the parameter $p$]\label{lem3bis}
Let $N\ge 4$ be an integer and let $\varphi$ be a three times continuously differentiable function over $[0,1]$. Let furthermore $p$ be a parameter lying in the interval $\left(1/3, 2/3\right)$. Assume that  $\varphi$ is even about 1/2. Then, the conclusion of Lemma~\ref{lem3} still holds. 
\end{lem}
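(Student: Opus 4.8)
The plan is to follow the proof of Lemma~\ref{lem3} essentially verbatim, performing two substitutions: replace the boundary condition $\varphi'(0)=\varphi'(1)=0$ — which in Lemma~\ref{lem3} was used only to kill the $\eta_N(p)$ error term produced by Lemma~\ref{lem1} — by the exponential decay of $\eta_N(p)$ guaranteed by Proposition~\ref{keypropopmidle}, and replace the appeal to Lemma~\ref{lem2} by an appeal to its localised analogue Lemma~\ref{lem2bis}. Concretely: identity~\eqref{telescopic1} is unaffected by adding a constant to $\varphi$, and evenness about $1/2$ forces $\varphi(0)=\varphi(1)$, so one may assume without loss of generality that $\varphi(0)=\varphi(1)=0$. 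Evenness of $\varphi$ about $1/2$ makes $\varphi'$ odd about $1/2$, and three times continuous differentiability of $\varphi$ makes $\varphi'$ twice continuously differentiable; feeding these two facts into Lemma~\ref{lem1} exactly as in the derivation of~\eqref{telescopic} (the weights $p\varphi(\cdot)+(1-p)\varphi(1-\cdot)$ and $p\varphi'(\cdot)-(1-p)\varphi'(1-\cdot)$ collapsing to $\varphi$ and $\varphi'$ respectively) yields
\[
\mathcal{E}_N^{(p)}[\varphi]\;=\; \mathcal{E}_{N-1}^{(p)}[\varphi] + \frac{1}{N}\cdot \mathcal{E}_{N-1}^{(p)}\!\left[2\varphi'-\varphi^*\right]+O_{\varphi}\!\left(\frac{1}{N^{2}}\right)+O\!\left(\eta_N(p)\cdot\frac{\max\{|\varphi'(0)|,|\varphi'(1)|\}}{N}\right).
\]

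The only term that differs from the setting of Lemma~\ref{lem3} is the last one, which no longer vanishes. But Proposition~\ref{keypropopmidle} gives $\eta_N(p)=\max_{-2\le n\le 2}g_N(n,p)\le K\beta^{2}\gamma^{-N}$, which decays exponentially in $N$; hence this error term is $O_{\varphi,p}(1/N^{2})$ (indeed $o(1/N^{k})$ for every $k$) and is absorbed into the $O_{\varphi,p}(1/N^{2})$ remainder. It then only remains to pass from the coefficient $2\varphi'-\varphi^*$ to $\tfrac12\varphi'-\varphi^*$: writing $2\varphi'-\varphi^*=(\tfrac12\varphi'-\varphi^*)+\tfrac32\varphi'$ and applying Lemma~\ref{lem2bis} to $\varphi'$ — which is legitimate since $\varphi'$ is odd about $1/2$, twice continuously differentiable, and $p\in(1/3,2/3)$ — gives $\mathcal{E}_{N-1}^{(p)}[\varphi']=O_{\varphi,p}(1/N)$, so that $\tfrac{3}{2N}\mathcal{E}_{N-1}^{(p)}[\varphi']=O_{\varphi,p}(1/N^{2})$. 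Substituting back produces exactly~\eqref{telescopic1}.

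I do not expect a genuine obstacle here: the argument is a direct transcription of the proofs of Lemmata~\ref{lem3} and~\ref{lem2bis}. The one place where the hypothesis $p\in(1/3,2/3)$ is essential — rather than mere regularity or a boundary condition — is the control of the boundary-derivative error term coming out of Lemma~\ref{lem1}, and this is precisely where Proposition~\ref{keypropopmidle} is invoked. The only care needed is bookkeeping: checking that the parity transfer ($\varphi$ even $\Rightarrow$ $\varphi'$ odd about $1/2$) and the regularity drop (three times $\Rightarrow$ twice continuously differentiable) hold, so that Lemmata~\ref{lem1} and~\ref{lem2bis} genuinely apply to $\varphi$ and to $\varphi'$ respectively.
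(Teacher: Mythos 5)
Your proof is correct and follows the same route as the paper: the exponential decay of $\eta_N(p)$ from Proposition~\ref{keypropopmidle} absorbs the boundary-derivative error term of~\eqref{telescopic} into the $O_{\varphi,p}(N^{-2})$ remainder, after which the argument of Lemma~\ref{lem3} goes through. You are in fact slightly more careful than the paper's one-line reduction, since you note explicitly that the internal appeal to Lemma~\ref{lem2} (to control $\mathcal{E}_{N-1}^{(p)}[\varphi']$, whose boundary values need no longer vanish) must be replaced by Lemma~\ref{lem2bis}.
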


\begin{proof}
The exponential decay of the sequence $\left(\eta_N(p)\right)_{N\ge 3}$  ensured by Proposition~\ref{keypropopmidle} implies that the last error term in equation~\eqref{telescopic} may be absorbed in the preceeding error term $O_{\varphi, p}\left(N^{-2}\right)$. This reduces the proof to that of Lemma~\ref{lem3}.
\end{proof}


\begin{proof}[Completion of the proof of Point 2 in Theorem~\ref{mainthm}] The goal is to show that the distributional limit $X^{(p)}$ of the sequence of random variables $\left(X_N^{(p)}\right)_{N\ge 3}$ takes the almost sure value $1/2$  under the assumption that $p\in (1/3, 2/3)$. To this end, let $\varphi$ be a thrice continuously differentiable function even about $1/2$ on $[0,1]$. Assume that $\varphi$ is monotonic increasing on $[0, 1/2]$ and monotonic decreasing on $[1/2, 1]$. From Lemma~\ref{lem3bis}, given any integers $L>M\ge 3$, 
\begin{equation}\label{cauchyop}
\mathcal{E}_L^{(p)}[\varphi]-\mathcal{E}_M^{(p)}[\varphi]\;=\; \sum_{M<N\le L}\frac{1}{N}\cdot \mathcal{E}_{N-1}^{(p)}\left[\frac{1}{2}\cdot \varphi'-\varphi^*\right]+O_{\varphi, p}\left(\frac{1}{M}\right).
\end{equation}
This implies that the partial sums of the series $$\sum_{N\ge 3}\frac{1}{N}\cdot \mathcal{E}_{N-1}^{(p)}\left[\frac{1}{2}\cdot \varphi'-\varphi^*\right]$$are bounded. Since the conditions placed onto $\varphi$ imply that the general term of this series is a nonnegative sequence, it converges, hence is Cauchy. From relation~\eqref{cauchyop}, this claim also holds for the sequence $\left(\mathcal{E}_N^{(p)}[\varphi]\right)_{N\ge 3}$, which is therefore also convergent.\\

\noindent In the particular case that $\varphi=-\varphi_{2k} $ for some integer $k\ge 1$ (where the polynomial map $\varphi_{2k} $ is defined in~\eqref{defpolymap}), it holds that $\frac{1}{2}\left(-\varphi'_{2k}\right)-\left(-\varphi_{2k}\right)^*\;=\;2k\cdot\varphi_{2k}$ in such a way that one obtains the respective convergence of the series and of the sequence $$\sum_{N\ge 3}\frac{1}{N}\cdot \mathcal{E}_{N-1}^{(p)}[\varphi_{2k}]\qquad \textrm{and}\qquad \left(\mathcal{E}_N^{(p)}[\varphi_{2k}]\right)_{N\ge 3}.$$As a conquence, for any $k\ge 1$, 
\begin{equation}\label{limspaneven}
\lim_{N\rightarrow\infty}\; \mathcal{E}_N^{(p)}[\varphi_{2k}]\;=\; 0.
\end{equation}

\noindent Consider now the general case where $\varphi\in C^0([0,1])$ meets the boundary condition $\varphi(0)=\varphi(1)$. As in the proof of Lemma~\ref{lem6}, decompose it into odd and even parts about $1/2$, viz.~$\varphi=\varphi_O+\varphi_E$. Then, the odd part $\varphi_O$ is such that $\varphi_O(0)=\varphi_O(1)=0$. From Lemma~\ref{lem2bis}, this implies that $\lim_{N\rightarrow\infty} \mathcal{E}_N^{(p)}[\varphi_O] = 0$. As for the even part, as in the proof of Lemma~\ref{lem6}, it can be uniformly approximated in the real span of the polynomial maps $\left\{\varphi_{2k}\right\}_{k\ge 0}$. The constant term in such an approximation to $\varphi_E$ in the real span of  $\left\{\varphi_{2k}\right\}_{k\ge 0}$ approximates 
$\varphi_E(1/2)=\varphi(1/2)$. In view of the limit relation~\eqref{limspaneven} valid for all $k\ge 1$, one thus quickly retrieves that $$\lim_{N\rightarrow\infty} \; \mathcal{E}_N^{(p)}[\varphi]\;=\;\varphi\left(\frac{1}{2}\right),$$which concludes the proof~\textsuperscript{6}. 

\let\thefootnote\relax\footnotetext{\textsuperscript{6} {As pointed out by the referee, Point 2 in Theorem~\ref{mainthm} can be established in a quicker way without resorting to Lemmata~\ref{lem2bis} and~\ref{lem3bis}. The suggested argument relies on a classical result in probability theory and can be sketched as follows~: fix a small enough $\delta>0$. Then, the probability of the event $\left\{0\le X_N<\delta\right\}\cup \left\{X_N>1-\delta\right\}$ is bounded above by $$\sum_{|n|\le\delta N} g_N(n,p)\; \le\; 2K\sum_{0 \le n\le \delta N} \frac{\beta^n}{\gamma^N}\;\le\; 2KN\frac{\beta^{\delta N}}{\gamma^N}\cdotp$$ The right--hand side tends to zero as $N$ tends to infinity if, and only if, $\delta \log \beta < \log \gamma$. Under this assumption, Theorem 2.1 in \cite{bili} implies that the limit distribution $\xi$ of the sequence of random variables $\left(X_N\right)_{N\ge 3}$ assigns a zero mass to the $\delta$--neighborhood of 0 on the unit circle. As a consequence, the probability of the event $\{\xi=0\}$ vanishes which, from Point 1 in  Theorem~\ref{mainthm} , implies that the probability of the event $\{\xi=1\}$ is 1, as desired.}}
\end{proof}

\subsection{A Central Limit Theorem in the unbiased Case $p=1/2$}

Assume throughout this section that $p=1/2$. For the sake of simplicity of notations, set then 
\begin{align*}
\eta_N\;=\;& \eta_N\left(\frac{1}{2}\right), \qquad \mathbb{E}_N\;=\;\mathbb{E}_N^{(1/2)},\qquad\mathcal{E}_N\left[\varphi\right]\;=\; \mathcal{E}_N^{(1/2)}\left[\varphi\right]\\ 
&X_N\;=\; X_N^{(1/2)} \quad\textrm{and}\quad g_N(n)\;=\; g_N\left(n,\frac{1}{2}\right).
\end{align*}

\noindent In this setup, the probability for the  participant in position $n$ to survive in a round with $N$ participants is the same as the probability for the participant in position $-n \pmod{N}$ to survive in a round with $N$ participants when the directions of elimination, which are chosen equiprobably, are switched. As a consequence, the symmetry property 
\begin{equation}\label{symprounbia}
g_N(n)=g_N(-n)
\end{equation}
is met and the recurrence relations stated in Proposition~\ref{recurrel} simplify to

\begin{equation}\label{symprob}
g_N(n) = \begin{cases}
  g_{N-1}(-1)  &  \text{ if }  n \equiv 0 \pmod{N}  \\
  g_{N-1}(-2)/2 & \text{ if } n \equiv \pm 1 \pmod{N}\\
  \left(g_{N-1}(n-2) + g_{N-1}(n+1)\right)/2  &  \text{ if } n\not\equiv \pm 1, 0\pmod{N}.  
\end{cases}
\end{equation}

\noindent The goal in this section is to establish, in three steps, the Central Limit Theorem stated in Point 3 of Theorem~\ref{mainthm}.

\subsubsection{Decay Rate of the Probabilities of Survival.} 

The key result allowing one to obtain the convergence rate to the constant $1/2$ in the form of a Central Limit Theorem is the following one, which shows that participants away from the mid-position in the circle have a probability of survival decreasing exponentially fast. This statement refines that of Proposition~\ref{recurrel} (which was concerned with the case where $1/3<p<2/3$) and heavily relies on various symmetries met by the elimination process  in the unbiased case $p=1/2$.

\begin{prop}[Exponential decay of the probability of survival away from the mid-point position in the unbiased case]\label{propdecayCLT} There exist a constant $K>0$  such that given any sufficiently small $\varepsilon\in (0, 1)$ and  any real $\alpha\in (1, 1+\varepsilon]$,  it holds that for all integers $N\ge 1$ and $n\in \left\llbracket 0, N-1\right\rrbracket$, 
\begin{equation}\label{probboununbioa}
g_N(n)\;\le \; K \cdot \alpha^{2(1+\varepsilon)n-N}.
\end{equation}
\end{prop}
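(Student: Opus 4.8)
The plan is to establish, by strong induction on $N$, the \emph{folded} estimate
$$g_N(n)\;\le\; K\cdot\alpha^{\,2(1+\varepsilon)\left\langle n\right\rangle_N-N}\qquad\text{for all }N\ge 1\text{ and }n\in\Z,$$
where $\left\langle n\right\rangle_N=\textrm{dist}(n,N\Z)$ as in Proposition~\ref{keypropopmidle}. Since $\left\langle n\right\rangle_N\le n$ whenever $n\in\left\llbracket 0,N-1\right\rrbracket$ and $\alpha>1$, this folded estimate is stronger than~\eqref{probboununbioa}. Moreover, the symmetry~\eqref{symprounbia} shows that it suffices to treat the range $0\le n\le N/2$, in which $\left\langle n\right\rangle_N=n$. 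First I would fix $\varepsilon>0$ small enough that a short list of elementary inequalities in the variable $\alpha\in(1,1+\varepsilon]$ hold (those displayed below), and then choose the absolute constant $K$ large enough that the folded estimate is valid for every $N$ below an absolute threshold $N_0$ and every residue of $n$ modulo $N$; this initialisation is possible because only finitely many probabilities $g_N(n)$ are then involved. The inductive step propagates the bound from all levels $<N$ to level $N$ by invoking the simplified recursion~\eqref{symprob}.

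For $N>N_0$ I would split into the cases of~\eqref{symprob}. If $n\equiv0\pmod N$, then $g_N(0)=g_{N-1}(-1)$, and one more application of~\eqref{symprob} gives $g_N(0)=\tfrac12 g_{N-2}(-2)$ with $\left\langle-2\right\rangle_{N-2}=2$; the inductive hypothesis yields $g_N(0)\le\tfrac12K\alpha^{4(1+\varepsilon)+2}\alpha^{-N}$, which is at most $K\alpha^{-N}$ as soon as $\tfrac12\alpha^{6+4\varepsilon}\le1$ (this is the only place where the recursion must be unrolled twice). If $n\equiv 1\pmod N$, then directly $g_N(1)=\tfrac12 g_{N-1}(-2)$ with $\left\langle-2\right\rangle_{N-1}=2$, and the bound closes provided $\tfrac12\alpha^{2(1+\varepsilon)+1}\le1$; the case $n\equiv-1\pmod N$ is outside the range $n\le N/2$ and is recovered from this one through~\eqref{symprounbia}. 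For a generic $n$ with $2\le n\le(N-3)/2$, both shifted arguments $n-2$ and $n+1$ still lie in the first half of the round with $N-1$ participants, so the inductive hypothesis gives
$$g_N(n)\;\le\;\tfrac K2\Bigl(\alpha^{\,2(1+\varepsilon)(n-2)-(N-1)}+\alpha^{\,2(1+\varepsilon)(n+1)-(N-1)}\Bigr)\;=\;K\alpha^{\,2(1+\varepsilon)n-N}\cdot\Bigl(\tfrac12\alpha^{-3-4\varepsilon}+\tfrac12\alpha^{\,3+2\varepsilon}\Bigr),$$
and one needs the bracket to be at most $1$. Finally, in the transition zone $(N-3)/2<n\le N/2$ one has instead $\left\langle n+1\right\rangle_{N-1}=N-n-2$ (using~\eqref{symprounbia} at level $N-1$) with $N-2n-2\in\{-2,-1,0\}$, and the analogous computation gives the milder requirement $\tfrac12\alpha^{-3-4\varepsilon}+\tfrac12\alpha^{\,2(1+\varepsilon)(N-2n-2)+1}\le1$. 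Taking $N_0$ large enough, say $N_0=8$, guarantees that $(N-3)/2\ge 2$ and that the reductions $\left\langle-2\right\rangle_{N-1}=\left\langle-2\right\rangle_{N-2}=2$ used above are valid, so the cases listed exhaust $0\le n\le N/2$ and the induction closes.

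I expect the crux — and essentially the only delicate point — to be the generic-case inequality $\tfrac12\alpha^{-3-4\varepsilon}+\tfrac12\alpha^{\,3+2\varepsilon}\le1$, the remaining inequalities above being comfortably compatible with it. Its role is subtle: if one replaces the exponent factor $2(1+\varepsilon)$ by $2$ in the target bound, the left-hand side still equals $1$ at $\alpha=1$ but acquires \emph{vanishing} derivative there, hence exceeds $1$ for $\alpha>1$, and the recursion then destroys the estimate. Keeping the factor $1+\varepsilon$ makes the left-hand side equal to $1$ at $\alpha=1$ with derivative $-\varepsilon<0$; this strictly negative slope is precisely the slack needed to absorb, along the recursion, both the unavoidable loss $\alpha^{+1}$ incurred each time $N$ grows by one and the loss $\alpha^{+2(1+\varepsilon)}$ caused by the outward shift $n\mapsto n+1$ in~\eqref{symprob}, against the gain $\alpha^{-4(1+\varepsilon)}$ from the inward shift $n\mapsto n-2$. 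Choosing $\varepsilon$ small enough keeps the left-hand side $\le1$ for $\alpha\in(1,1+\varepsilon]$. Throughout, the symmetry~\eqref{symprounbia} is the structural ingredient that makes the scheme run, as it both permits the reduction to $n\le N/2$ and lets the ``second-half'' argument $n+1$ appearing in~\eqref{symprob} be re-read via its folded value $N-n-2$.
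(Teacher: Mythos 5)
Your proof is correct and follows essentially the same route as the paper: a strong induction on $N$ via the recursion~\eqref{symprob} and the symmetry~\eqref{symprounbia}, with the same case split ($n\equiv 0$, unrolled twice; $n\equiv\pm1$; generic $n$) and the same governing inequality $\tfrac12\alpha^{1-4(1+\varepsilon)}+\tfrac12\alpha^{1+2(1+\varepsilon)}\le 1$, whose strictly negative slope at $\alpha=1$ you correctly identify as the reason the factor $1+\varepsilon$ is needed. The only (harmless) deviation is that you propagate the stronger folded bound with $\left\langle n\right\rangle_N$ in the exponent, which costs you an extra transition-zone subcase near $n=N/2$ that the paper avoids by inducting on the unfolded bound directly.
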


\begin{proof}
Fix first $\varepsilon\in (0,1]$. Elementary calculations then show that the inequality in the variable $\alpha\ge 0$ 
\begin{equation}\label{ineqcalcubas} 
\max\left\{\alpha^{2+4(1+\varepsilon)},\; \alpha^{1-4(1+\varepsilon)}+\alpha^{1+2(1+\varepsilon)}\right\}\;\le\; 2
\end{equation} 
is met as soon as
\begin{equation*}\label{boundineqcalc}
\alpha\;\le\;\alpha(\varepsilon), \quad \textrm{where}\quad \alpha(\varepsilon)\;=\;\min\left\{2^{1/10}, 1+4^{2+\varepsilon}\cdot \varepsilon\cdot (1+\varepsilon)^2\right\},
\end{equation*}
and in  particular for any
\begin{equation*}\label{boundineqcalcbis}
1\;<\;\alpha\;\le\;1+\varepsilon
\end{equation*}
provided that $\varepsilon$ is small enough.\\

\noindent This observation enables one to develop a proof of the statement analogous to that of Proposition~\ref{recurrel}. To see this, fix any constant $K\ge 1$ such that the upper bound in~\eqref{probboununbioa} is verified for the finitely many values of the probabilities corresponding to the integers $N\le 4$ for the two limit values $\alpha=1$ and $\alpha=2$, already when $\varepsilon=0$. Fix then $\varepsilon>0$ small enough so that for any $\alpha\in (1, 1+\varepsilon]$,  inequality~\eqref{ineqcalcubas} is satisfied. Given an integer $N\ge 5$  for which the bound~\eqref{probboununbioa} holds for all probabilities up to the range $N-1$, consider the following distinction of cases relying on the recursion relations~\eqref{symprob}. In view of the symmetry property~\eqref{symprounbia}, they imply  inequality~\eqref{probboununbioa} for all admissible values of the integer $n\pmod{N}$~: 

\begin{itemize}
\item[$\bullet$] when $n\equiv 0\pmod{N}$, 
\begin{align*}
g_N(0)&\;=\; g_{N-1}(-1)\;=\; \frac{1}{2}\cdot g_{N-2}(-2)\;\underset{\eqref{symprounbia}}{=}\; \frac{1}{2}\cdot g_{N-2}(2)\\
&\le\; K\cdot \frac{1}{2}\cdot \alpha^{2(1+\varepsilon)\cdot 2- (N-2)}\;=\; K\cdot \alpha^{-N}\cdot \left(\frac{1}{2}\cdot\alpha^{2+4(1+\varepsilon)}\right)\\
&\underset{\eqref{ineqcalcubas}}{\le }\; K\cdot \alpha^{-N}.
\end{align*}

\item[$\bullet$] when $n\equiv 1\pmod{N}$, 
\begin{align*}
g_{N}(1)&\;=\; \frac{1}{2}\cdot g_{N-1}(-2)\;\underset{\eqref{symprounbia}}{=}\; \frac{1}{2}\cdot g_{N-1}(2)\\
&\le\; K\cdot \frac{1}{2}\cdot \alpha^{2(1+\varepsilon)\cdot 2- (N-1)}\;=\; K\cdot \alpha^{2(1+\varepsilon)-N}\cdot \left(\frac{1}{2}\cdot\alpha^{1+2(1+\varepsilon)}\right)\\
&\underset{\eqref{ineqcalcubas}}{\le }\; K\cdot \alpha^{-N}.
\end{align*}

\item[$\bullet$] when $n\in \left\llbracket 2, N/2\right\rrbracket\pmod{N}$, 
\begin{align*}
g_{N}(n)&\;=\; \frac{1}{2}\cdot g_{N-1}(n-2)+ \frac{1}{2}\cdot g_{N-1}(n+1)\\
&\le\; K\cdot \frac{1}{2}\cdot \alpha^{2(1+\varepsilon)\cdot (n-2)- (N-1)}+ K\cdot \frac{1}{2}\cdot \alpha^{2(1+\varepsilon)\cdot (n+1)- (N-1)}\\
&=\; K\cdot \alpha^{2(1+\varepsilon)n-N} \cdot \left(\frac{1}{2}\cdot\alpha^{1-4(1+\varepsilon)}+\frac{1}{2}\cdot\alpha^{1+2(1+\varepsilon)}\right)\\
&\underset{\eqref{ineqcalcubas}}{\le }\; K\cdot \alpha^{2(1+\varepsilon)n-N}.
\end{align*}
\end{itemize}
This completes the proof of the proposition.
\end{proof}

\subsubsection{Some Moment Estimates.}

The Central Limit Theorem relies on various moment estimates for the polynomial maps already introduced in the proof of Lemma~\ref{lem6}, namely, given an integer $k\ge 1$, $$\varphi_k\;:\; x\in [0,1]\;\mapsto\;\left(\frac{1}{2}-x\right)^k.$$

\begin{lem}[Moment Estimates]\label{momentestim}
Let $k\ge 1$. Then, there exists a real constant $\theta_k>0$ such that  $$\mathcal{E}_N[|\varphi_k|]\;\le\; \theta_k\cdot \left(\frac{\ln N}{N}\right)^{k/2}.$$
\end{lem}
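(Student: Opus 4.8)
The plan is to exploit the exponential decay bound from Proposition~\ref{propdecayCLT} to split the sum defining $\mathcal{E}_N[|\varphi_k|]$ into a "bulk" part near the mid-point $1/2$ and a "tail" part away from it, with the threshold chosen at distance $\asymp (\ln N)/N$ from $1/2$. Concretely, writing $\mathcal{E}_N[|\varphi_k|] = \sum_{n=0}^{N-1} \left|\tfrac{1}{2} - \tfrac{n}{N}\right|^k g_N(n)$ and recalling the symmetry $g_N(n) = g_N(-n)$ from~\eqref{symprounbia}, I would reindex so that the weight depends on $\langle n\rangle_N = \mathrm{dist}(n, N\Z)$ and the quantity $\left|\tfrac12 - \tfrac nN\right|$ is, up to the endpoint, comparable to $\tfrac{1}{N}\left|\tfrac N2 - \langle n \rangle_N'\right|$ where $\langle n\rangle_N'$ runs over $\llbracket 0, N/2\rrbracket$. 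The key point is that $\left|\tfrac12 - \tfrac nN\right|^k$ is largest (of order $1$) precisely where $g_N(n)$ is smallest (of order $\gamma^{-N}$ for some $\gamma>1$), and vice versa.

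First I would fix $\varepsilon \in (0,1)$ small and $\alpha \in (1, 1+\varepsilon]$ as permitted by Proposition~\ref{propdecayCLT}, so that $g_N(n) \le K\alpha^{2(1+\varepsilon)n - N}$ for $n \in \llbracket 0, N-1\rrbracket$; by the symmetry I get the analogous bound $g_N(n) \le K\alpha^{2(1+\varepsilon)(N-n)-N}$ for $n$ in the upper half. Then I would set a threshold $n^* = \lfloor N/2 - c\ln N\rfloor$ for a constant $c$ to be chosen (depending on $\alpha$). On the tail $0 \le n \le n^*$ (and symmetrically on $n \ge N - n^*$), I bound $\left|\tfrac12 - \tfrac nN\right|^k \le (1/2)^k$ and use the geometric bound: $\sum_{n \le n^*} g_N(n) \le K \sum_{n \le n^*} \alpha^{2(1+\varepsilon)n - N} \le K' \alpha^{2(1+\varepsilon)n^* - N}$, and with $n^* \approx N/2 - c\ln N$ this is $\le K' \alpha^{-2(1+\varepsilon)c \ln N} = K' N^{-2(1+\varepsilon)c\ln\alpha}$; choosing $c$ so that $2(1+\varepsilon)c\ln\alpha \ge k/2$ makes the tail contribution $O(N^{-k/2})$, which is certainly $o\!\left((\ln N/N)^{k/2}\right)$... wait, that is actually smaller, so it is absorbed. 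On the bulk $n^* < n < N - n^*$, I use instead the crude probability bound $\sum_n g_N(n) = 1$ together with $\left|\tfrac12 - \tfrac nN\right| \le \tfrac{1}{N}\!\left(\tfrac N2 - n^*\right) + \tfrac1N = O\!\left(\tfrac{\ln N}{N}\right)$, whence $\left|\tfrac12 - \tfrac nN\right|^k = O\!\left((\ln N/N)^k\right)$ on the bulk and the bulk contribution is $O\!\left((\ln N/N)^k\right)$, which is even smaller than the claimed $(\ln N/N)^{k/2}$. Combining the two pieces yields $\mathcal{E}_N[|\varphi_k|] = O\!\left((\ln N/N)^{k/2}\right)$ after absorbing constants into $\theta_k$.

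The main obstacle I anticipate is bookkeeping the asymmetry between the exponents $2(1+\varepsilon)n$ appearing in Proposition~\ref{propdecayCLT} versus the factor $\langle n\rangle_N$ one would naively expect, and making sure the geometric-sum estimate on the tail is applied on the correct half of the circle with the correct orientation of the exponent (the bound decays in $n$ near $0$ and decays in $N-n$ near $N-1$). A secondary subtlety is the endpoint term $n = N-1$, i.e. $\left|\tfrac12 - \tfrac{N-1}{N}\right| = \tfrac12 - \tfrac1N$, which contributes weight of order $1$; but here $g_N(N-1) = g_N(-1) = g_{N-1}(-2)/2$ is exponentially small by Proposition~\ref{propdecayCLT}, so it is harmless. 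One should also check that the choice of $c$ can be made uniform in $N$ and that $n^* \ge 2$ for $N$ large (small $N$ being absorbed into $\theta_k$). Everything else is a routine summation of a geometric series, so no genuinely hard analytic input beyond Proposition~\ref{propdecayCLT} is needed.
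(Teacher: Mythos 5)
Your overall strategy (split the sum into a window about $1/2$ and a tail, bound the tail mass via Proposition~\ref{propdecayCLT} and a geometric series, and bound the window contribution by the width of the window) is exactly the paper's strategy, but your choice of threshold is wrong and the error is hidden by an arithmetic slip in the tail estimate. You substitute $n^*\approx N/2-c\ln N$ into the exponent $2(1+\varepsilon)n^*-N$ and obtain $-2(1+\varepsilon)c\ln N$; in fact
\begin{equation*}
2(1+\varepsilon)\left(\tfrac{N}{2}-c\ln N\right)-N\;=\;\varepsilon N-2(1+\varepsilon)c\ln N,
\end{equation*}
and the term $\varepsilon N$ that you dropped dominates. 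With $\varepsilon$ fixed and $\alpha\in(1,1+\varepsilon]$, the resulting bound $K'\alpha^{\varepsilon N-2(1+\varepsilon)c\ln N}$ blows up exponentially in $N$; the bound of Proposition~\ref{propdecayCLT} is simply vacuous for all $n$ within distance roughly $\varepsilon N/2$ of the midpoint, so a window of radius $O(\ln N)$ participants is far too narrow for the tail sum to be controlled. Nor can this be repaired by also letting $\varepsilon\to 0$ while keeping that window: to make $\varepsilon N\lesssim\ln N$ you would need $\varepsilon\lesssim \ln N/N$, and then $\ln\alpha\lesssim\ln N/N$ makes the whole exponent tend to $0$, so the geometric tail bound tends to a constant rather than to $0$. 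This failure is not an artifact of the method: the lower bound $\sum_{N\le L}\mathcal{E}_N[\varphi_2]\gtrsim\ln L$ in Lemma~\ref{refinedmomentestim} forces the spread of $X_N$ about $1/2$ to be of order $N^{-1/2}$ on average, which is much larger than $\ln N/N$, so most of the mass genuinely lies in what you call the tail.

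The fix is the optimization the paper actually performs: take the window $\left[(1/2-\varepsilon)N,(1/2+\varepsilon)N\right]$ with $\varepsilon$ depending on $N$, note that the tail mass is at most of order $\varepsilon^{-1}(1+\varepsilon)^{-\varepsilon N}\le \varepsilon^{-1}e^{-\varepsilon^2N/2}$ (here the exponent at the threshold is $2(1+\varepsilon)(1/2-\varepsilon)N-N=-(\varepsilon+2\varepsilon^2)N$, which is where the cancellation you lost actually happens), and choose $\varepsilon=\sqrt{(k+1)\ln N/N}$. The dominant contribution is then the window term $\varepsilon^k=((k+1)\ln N/N)^{k/2}$ — it is the bulk, not the tail, that produces the stated rate, whereas in your accounting both pieces came out strictly smaller than $(\ln N/N)^{k/2}$, which should itself have signalled that something was off.
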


\begin{proof}
Given $\varepsilon\in (0,1)$ and $N\ge 1$, define 
\begin{equation*}
\mathcal{E}_{\varepsilon}(N)\;=\; \left\llbracket 0, N \right\rrbracket\backslash \left\llbracket \left(\frac{1}{2}-\varepsilon\right)\cdot N, \left(\frac{1}{2}+\varepsilon\right)\cdot N  \right\rrbracket \quad \textrm{and}\quad \nu_{\varepsilon}(N)\;=\; \sum_{n\in \mathcal{E}_{\varepsilon}(N)} g_N(n). 
\end{equation*}
Then, 
\begin{align}
\mathcal{E}_N[|\varphi_k|]\;&=\; \left(\sum_{\underset{0\le n\le N-1}{n\not\in \mathcal{E}_{\varepsilon}(N)}}+ \sum_{\underset{0\le n\le N-1}{n\in \mathcal{E}_{\varepsilon}(N)}} \right) \left|\frac{1}{2}-\frac{n}{N}\right|^k\cdot g_N(n)\nonumber\\
&\le\; \varepsilon^k \cdot\left(1-\nu_{\varepsilon}(N)\right)+\nu_{\varepsilon}(N).\label{ineqmajunbi}
\end{align}
Furthemore, from Proposition~\ref{propdecayCLT}, there exists a constant $K>0$ such that, provided that $\varepsilon>0$ is chosen small enough, 
\begin{align*}
\nu_{\varepsilon}(N)\;&\underset{\eqref{symprounbia}}{=}\; 2\cdot \left(\sum_{0\le n\le (1/2-\varepsilon)N}g_N(n)\right)\\
&\le \; 2\cdot K\cdot\left( \sum_{0\le n\le (1/2-\varepsilon)N}\left(1+\varepsilon\right)^{2(1+\varepsilon)n-N}\right)\\
&\le \; 8\cdot K\cdot\frac{(1+\varepsilon)^{2(1+\varepsilon)(1/2-\varepsilon)N}}{\left((1+\varepsilon)^{2}-1\right)\cdot (1+\varepsilon)^{N}} \\
&\le \; 8\cdot K\cdot\frac{1}{\varepsilon\cdot (1+\varepsilon)^{\varepsilon N}}\\
&\le \; 8\cdot K\cdot\frac{1}{\varepsilon\cdot \exp\left(\varepsilon^2N/2\right)},
\end{align*}
where the last inequality follows from an easily verified convexity inequality. The last quantity is minimised when 
\begin{equation}\label{choicevareps}
\varepsilon\;= \; \varepsilon_k \;=\; \sqrt{(k+1)\cdot\frac{\ln N}{N}},
\end{equation} 
in which case one infers the existence of a real $\theta_k>0$ such that 
\begin{equation}\label{majnuvare}
\nu_{\varepsilon_k}(N)\;\le\;\theta_k\cdot \frac{1}{N^{k/2}\cdot\sqrt{\ln N}}\cdot \end{equation}
The right--hand side of inequality~\eqref{ineqmajunbi} specialised to the cases when  relations~\eqref{choicevareps} and~\eqref{majnuvare} hold then yields the sought conclusion upon adjusting the value of the real $\theta_k$.
\end{proof}

\noindent Lemma~\ref{momentestim} can be further refined  when restricting to the first and the second moments~:

\begin{lem}[Refined first and second moment estimates]\label{refinedmomentestim} The sum of the second moments satisfies the estimate
\begin{equation*}\label{2ndmoment}
\sum_{N=3}^{L}\mathcal{E}_N[\varphi_2]\;\asymp\; \ln L
\end{equation*}
for all $L$ large enough. As for the first moments, they decay exponentially in the sense that there exists a parameter $A>1$ and a constant $\theta>0$ such that for all $N\ge 3$, 
\begin{equation*}\label{1stmoment}
\left|\mathcal{E}_N[\varphi_1]\right|\;\le\; \frac{\theta}{A^N}\cdotp
\end{equation*}
\end{lem}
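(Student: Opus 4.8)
The aim is to establish two refinements of the moment estimate from Lemma~\ref{momentestim}: an exact (up to constants) growth rate $\sum_{N=3}^L \mathcal{E}_N[\varphi_2] \asymp \ln L$ for the second moments, and an exponential decay $|\mathcal{E}_N[\varphi_1]| \le \theta/A^N$ for the first moments. I would treat these two claims separately, since the mechanisms are quite different: the first moment vanishes by a symmetry argument, whereas the second moment requires genuine two--sided control.

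\emph{The first moment.} The function $\varphi_1 \colon x\mapsto 1/2-x$ is odd about $1/2$, and the boundary condition $\varphi_1(0)=1/2 = -\varphi_1(1)+1$ does not directly match the hypothesis $\varphi(0)=\varphi(1)$ of Lemma~\ref{lem2bis}; however, after subtracting a suitable constant one may instead exploit the exact symmetry~\eqref{symprounbia}, namely $g_N(n)=g_N(-n)$. Indeed, $\mathcal{E}_N[\varphi_1] = \sum_{n=0}^{N-1}(1/2 - n/N)\,g_N(n)$, and by symmetry this sum collapses: pairing the term at $n$ with the term at $N-n$ shows that $\mathcal{E}_N[\varphi_1]$ equals a boundary contribution concentrated on the indices $n\in\{0,\pm 1,\pm 2\}$ (and, for even $N$, on $n=N/2$, where $\varphi_1$ vanishes). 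Each such contribution is $O(\eta_N)$ with $\eta_N = \eta_N(1/2)$, and Proposition~\ref{propdecayCLT} gives $\eta_N \le K\alpha^{-N}(1+o(1))$ for a fixed $\alpha>1$. Taking $A=\alpha$ (or any $1<A<\alpha$ to absorb the polynomial and $o(1)$ factors) yields $|\mathcal{E}_N[\varphi_1]|\le \theta A^{-N}$ as claimed. Alternatively, one can simply feed $\varphi=\varphi_1$ into the approximate recursion of Lemma~\ref{lem1} together with the exponential decay of $\eta_N$ to get $|\mathcal{E}_N[\varphi_1]| \le |2p-1|\cdot|\mathcal{E}_{N-1}[\varphi_1]| + O(\eta_N)$, which at $p=1/2$ reads $|\mathcal{E}_N[\varphi_1]| = O(\eta_N)$ directly.

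\emph{The second moment: upper bound.} Here the upper bound $\sum_{N=3}^L \mathcal{E}_N[\varphi_2] \ll \ln L$ is immediate from Lemma~\ref{momentestim} with $k=2$: each term is $O(\ln N / N)$, and $\sum_{N=3}^L (\ln N)/N \asymp (\ln L)^2$ --- wait, this overshoots, so the crude bound is not enough and one must do better. The correct approach is to derive an asymptotic recursion for $u_N := \mathcal{E}_N[\varphi_2]$ itself. Applying Lemma~\ref{lem3} (with $p=1/2$, using Lemma~\ref{lem3bis} to discard the $\eta_N$ term) to the even function $\varphi_2$: since $\tfrac12\varphi_2' - \varphi_2^* = (\tfrac12 - x)\varphi_2'(x) = (\tfrac12-x)\cdot(-2)(\tfrac12-x) = -2\varphi_2$, one gets
\begin{equation*}
u_N - u_{N-1} \;=\; -\frac{2}{N}\,u_{N-1} + O_p\!\left(\frac{1}{N^2}\right),
\end{equation*}
so that $u_N = (1 - 2/N + O(N^{-2}))\,u_{N-1}$. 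This forces $u_N \asymp 1/N^2$?? --- no: telescoping gives $u_N \asymp \prod_{k}(1-2/k) \asymp N^{-2}$, and then $\sum u_N$ converges, contradicting $\asymp \ln L$. So the telescoping term $\tfrac12\varphi_2'-\varphi_2^*$ must in fact be computed correctly and the dominant balance comes from elsewhere; the resolution is that Lemma~\ref{lem3} was derived under the boundary condition $\varphi'(0)=\varphi'(1)=0$, which $\varphi_2$ violates ($\varphi_2'(0) = -1 \ne 0$), so one must return to Lemma~\ref{lem1} and keep the boundary terms. Redoing this, the extra boundary contribution produces a term of size $\asymp \eta_N\cdot N^{-1}$... which is again negligible. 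The honest route, then, is: (i) use Lemma~\ref{lem1} directly on $\varphi_2$ to obtain $u_N = u_{N-1} + \tfrac{c}{N} + O(N^{-2})$ for an explicit constant $c>0$ arising from $\mathcal{E}_{N-1}[2\varphi_2' - \varphi_2^*] = \mathcal{E}_{N-1}[(2 - 2x)\cdot(-2)(\tfrac12-x)]$ evaluated in the limit $x\to 1/2$ (where the random variable concentrates), giving $c = \lim_N \mathcal{E}_N[(2-2x)(1-2x)] \cdot(\text{const}) $; one checks $c$ reduces to a positive numerical constant since $X_N \to 1/2$ in probability (Point 2). Then telescoping yields $u_N = c\ln N + O(1)$, whence $\sum_{N=3}^L u_N = c\, L\ln L(1+o(1))$?? --- still the wrong order.

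\emph{Correct second-moment argument.} The genuine mechanism: $u_N = \mathcal{E}_N[\varphi_2]$ is the variance-type quantity $\mathbb{V}_N(X_N) \asymp \ln N / N$ by Lemma~\ref{momentestim}, and the claim $\sum_{N=3}^L u_N \asymp \ln L$ requires matching \emph{lower} bounds on $u_N$ of order $(\ln N)/N$, since $\sum (\ln N)/N \asymp (\ln L)^2 / 2 \ne \ln L$. As the target is $\ln L$, not $(\ln L)^2$, the individual terms must be of order $1/N$, i.e.\ $u_N \asymp 1/N$ --- consistent with $X_N$ concentrating at $1/2$ at scale $N^{-1/2}$ (not $(\ln N/N)^{1/2}$), which is the true fluctuation. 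Thus the plan is: (a) sharpen Lemma~\ref{momentestim} for $k=2$ by choosing $\varepsilon$ of order $N^{-1/2}$ (rather than $(\ln N/N)^{1/2}$) and showing the tail $\nu_\varepsilon(N)$ is still $o(1/N)$ via Proposition~\ref{propdecayCLT} at $\alpha = 1 + \varepsilon$, giving $u_N \ll 1/N$; (b) for the lower bound, use the recursion from Lemma~\ref{lem1}/\ref{lem3bis} to show $u_N - u_{N-1} \ge c/N + O(1/N^2)$ for some $c>0$ --- crucially this requires showing the relevant expectation $\mathcal{E}_{N-1}[\text{(degree-2 polynomial)}]$ converges to a \emph{positive} limit, which follows because $X^{(1/2)} = 1/2$ almost surely makes $\mathcal{E}_{N-1}[(\tfrac12-x)\varphi_2'] = -2\,\mathcal{E}_{N-1}[\varphi_2] \to 0$, so in fact the leading term cancels and $u_N$ is driven entirely by the $O(1/N)$ correction from the boundary/Taylor terms, whose sign is controlled by a more careful second-order expansion. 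Summing the inequality $c/N \le u_N - u_{N-1} + O(1/N^2)$ (or rather its Cesàro consequence) gives $\sum_{N=3}^L u_N \ge c' \ln L$, and combined with (a) yields $\sum_{N=3}^L u_N \asymp \ln L$.

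\textbf{Main obstacle.} The delicate point is pinning down the exact order of $u_N = \mathcal{E}_N[\varphi_2]$: the crude bound from Lemma~\ref{momentestim} gives only $u_N = O((\ln N)/N)$, which sums to $(\ln L)^2$, whereas the statement demands the summed quantity be $\asymp \ln L$, forcing $u_N \asymp 1/N$. Closing this gap requires (i) re-running the proof of Lemma~\ref{momentestim} with the optimal tail cutoff $\varepsilon \asymp N^{-1/2}$ so that Proposition~\ref{propdecayCLT} still applies (this needs $\varepsilon\log(1+\varepsilon) \asymp \varepsilon^2 \gg (\log N)/N$, i.e.\ $\varepsilon \gg \sqrt{(\log N)/N}$ — so one actually gets $u_N \ll (\log N)/N$ and no better from the tail alone!), and therefore (ii) a genuine recursive/telescoping analysis of $u_N$ via Lemma~\ref{lem3bis}, using the identity $\tfrac12\varphi_2' - \varphi_2^* = (\tfrac12-x)\varphi_2'(x)$ which is $\le 0$ on $[0,1]$, to show $u_N$ is eventually decreasing with $u_N - u_{N-1} \asymp -u_{N-1}/N$ up to $O(N^{-2})$, hence $u_N \asymp N^{-2}\cdot(\text{correction})$; reconciling this with the stated $\asymp \ln L$ is exactly where the careful bookkeeping of the non-vanishing boundary contributions in Lemma~\ref{lem1} (which were suppressed in Lemma~\ref{lem3} under the hypothesis $\varphi'(0)=\varphi'(1)=0$, violated here) becomes essential. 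I expect this reconciliation — identifying precisely which error term produces the $1/N$ contribution whose harmonic sum gives $\ln L$ — to be the crux of the argument; the first-moment bound, by contrast, is a routine consequence of the exponential decay of $\eta_N$ from Proposition~\ref{propdecayCLT}.
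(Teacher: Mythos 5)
Your first--moment argument is essentially the paper's: the symmetry $g_N(n)=g_N(-n)$ makes the terms indexed by $n$ and $N-n$ cancel in pairs, leaving $\mathcal{E}_N[\varphi_1]=g_N(0)/2$ (it is exactly the $n=0$ term that survives, not a cluster of indices near the origin, but this does not affect the conclusion), and Proposition~\ref{propdecayCLT} then gives the exponential decay. This part is fine.

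The second--moment argument has a genuine gap, and you in fact commit the error that hides the correct mechanism. From the telescopic relation you correctly reach
\begin{equation*}
u_N-u_{N-1}\;=\;-\frac{2}{N}\cdot u_{N-1}+O\!\left(\frac{1}{N^{2}}\right),
\end{equation*}
but you then conclude that telescoping forces $u_N\asymp\prod_k(1-2/k)\asymp N^{-2}$. This is false: the inhomogeneous forcing is not negligible against the homogeneous solution. Writing $u_N=\bigl(\prod_{k\le N}(1-2/k)\bigr)u_3+\sum_{m\le N}\bigl(\prod_{m<k\le N}(1-2/k)\bigr)r_m$ with $r_m=O(m^{-2})$ and $\prod_{m<k\le N}(1-2/k)\asymp(m/N)^2$, the second sum is $\asymp\sum_{m\le N}N^{-2}\asymp N^{-1}$; the $1/N$ contribution you spend the rest of the proposal hunting for sits inside the very error term you discard. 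The catch is that a bare $O(N^{-2})$ only yields the upper bound $u_N=O(1/N)$; for a matching lower bound one must know that the $N^{-2}$ coefficient has a positive limit. This is precisely what the paper supplies via the sharper telescopic Lemma~\ref{telunbia}, whose second--order term is the explicit quantity $C_N=\mathcal{E}_N[\sigma]$ with $\sigma(x)=\tfrac12(2-x)^2+\tfrac12(1+x)^2$; Point~2 of Theorem~\ref{mainthm} (concentration at $1/2$) gives $C_N\to\sigma(1/2)=9/4>0$. With the recursion $u_N=(1-2/N)u_{N-1}+C_N N^{-2}+O(\eta_N)$ in hand, an Abel summation turns it into $\sum_{N\le M}u_N+(M-2)u_M=\sum_{N}C_N/N+O(1)\asymp\ln M$, and a further summation over $M$ eliminates the boundary term and yields $\sum_{N\le L}u_N\asymp\ln L$. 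Your proposed alternatives do not close the gap, as you partly acknowledge: sharpening Lemma~\ref{momentestim} with $\varepsilon\asymp N^{-1/2}$ is blocked by the $\sqrt{(\ln N)/N}$ barrier in the tail estimate, and the inequality $u_N-u_{N-1}\ge c/N$ would force $u_N\gtrsim\ln N$, which is absurd.
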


\noindent The proof of Lemma~\ref{refinedmomentestim} relies on an auxiliary statement analogous to Lemma~\ref{lem1}. As the argument to establish the former auxiliary statement is very similar to one developed for the latter (upon calling, in this case, on the recursion relations~\eqref{symprob}), it is left to the reader.

\begin{lem}[Telescopic decomposition of the expectation in the unbiased case]\label{telunbia}
Let $\varphi$ be a three-times continuously differentiable function and let $N\ge 4$ be an integer. Then, $$\mathcal{E}_N[\varphi]-\mathcal{E}_{N-1}[\varphi]\;=\;\frac{1}{N}\cdot \mathcal{E}_{N-1}\left[\frac{1}{2}\varphi'-\varphi^*\right]+\frac{C_N}{N^2}+O_{\varphi}\left(\max\left\{\eta_N, \frac{\left\|\varphi^{'''}\right\|_{\infty}}{N^3}\right\}\right),$$where the  ${ }^*$ operator is here again defined by $\varphi^*~: x\mapsto x\cdot \varphi'(x)$ and 
where $$C_N\;=\; \frac{1}{2}\cdot\sum_{n=0}^{N-2}\left(\frac{1}{2}\cdot \left(2-\frac{n}{N-1}\right)^2+\frac{1}{2}\cdot \left(1+\frac{n}{N-1}\right)^2\right)\cdot \varphi''\left(\frac{n}{N-1}\right)\cdot g_{N-1}(n).$$
\end{lem}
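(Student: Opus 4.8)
The plan is to run the argument of Lemma~\ref{lem1} in the unbiased case, but to push every Taylor expansion one order further (to second order, with a third--order remainder) and to exploit the exact symmetry $g_N(n)=g_N(-n)$ from~\eqref{symprounbia} in order to discard odd parts \emph{exactly}, rather than only up to the periodic boundary corrections used in Lemmata~\ref{lem2} and~\ref{lem3}. First I would isolate the boundary contributions: by~\eqref{symprob}, the probabilities $g_N(0)$ and $g_N(\pm 1)$ are each at most $\eta_N$, so $\mathcal{E}_N[\varphi]=\sum_{n=2}^{N-2}\varphi(n/N)g_N(n)+O_\varphi(\eta_N)$. For $2\le n\le N-2$ one has $g_N(n)=\tfrac{1}{2}\left(g_{N-1}(n-2)+g_{N-1}(n+1)\right)$; shifting the index in the first sum and using $g_{N-1}(n+1)=g_{N-1}(N-2-n)$ in the second, both become sums over $0\le m\le N-2$, at the cost of the two extra terms $g_{N-1}(-1),g_{N-1}(-2)$, which by~\eqref{symprob} equal $g_N(0)$ and $2\,g_N(1)$ and hence are $O(\eta_N)$; the same manipulation also yields $\eta_{N-1}\le 2\eta_N$, which I use freely so as to phrase all errors in terms of $\eta_N$. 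The outcome is
\[
\mathcal{E}_N[\varphi]=\frac{1}{2}\sum_{m=0}^{N-2}\left[\varphi\!\left(\frac{m+2}{N}\right)+\varphi\!\left(1-\frac{m+2}{N}\right)\right]g_{N-1}(m)+O_\varphi(\eta_N).
\]

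Next I would write $\tfrac{m+2}{N}=\tfrac{m}{N-1}+\delta_m$ with $\delta_m=\tfrac{1}{N}\left(2-\tfrac{m}{N-1}\right)=O(1/N)$ and Taylor--expand $\varphi$ to second order about $\tfrac{m}{N-1}$ and about $1-\tfrac{m}{N-1}$; the third--order remainders, weighted by the probabilities $g_{N-1}(m)$ (which sum to $1$), contribute $O(\|\varphi'''\|_\infty/N^3)$. The zeroth--order part reassembles into $\mathcal{E}_{N-1}[\varphi_E]$ with $\varphi_E(x)=\tfrac{1}{2}(\varphi(x)+\varphi(1-x))$; the first--order part equals $\sum_m g_{N-1}(m)\,h\!\left(\tfrac{m}{N-1}\right)$ with $h(x)=\tfrac{1}{2N}(2-x)\left(\varphi'(x)-\varphi'(1-x)\right)$, and a short computation shows that its even part about $1/2$ is $\tfrac{1}{N}$ times the even part of $\left(\tfrac{1}{2}-x\right)\varphi'(x)=\tfrac{1}{2}\varphi'-\varphi^*$; and the second--order part, after one further use of the substitution $m\mapsto N-1-m$ together with~\eqref{symprounbia}, collapses to exactly $\tfrac{1}{N^2}C_N$ — this is precisely what the coefficient $\tfrac{1}{2}(2-x)^2+\tfrac{1}{2}(1+x)^2$ appearing in the definition of $C_N$ records — modulo one uncancelled boundary term of size $O_\varphi(\eta_N/N^2)$.

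The one genuinely new ingredient, absent from Lemma~\ref{lem1}, is the exact discrete oddness identity: if $\psi$ is odd about $1/2$, then $g_{N-1}(m)=g_{N-1}(N-1-m)$ forces $\mathcal{E}_{N-1}[\psi]=\psi(0)\,g_{N-1}(0)=O_\psi(\eta_N)$, the only surviving boundary contribution. Applying this to the odd parts of $\varphi$, of $h$, and of the second--order weight turns each even part produced above into the corresponding full expectation $\mathcal{E}_{N-1}[\,\cdot\,]$ up to $O_\varphi(\eta_N)$; collecting the three contributions yields
\[
\mathcal{E}_N[\varphi]-\mathcal{E}_{N-1}[\varphi]=\frac{1}{N}\,\mathcal{E}_{N-1}\!\left[\frac{1}{2}\varphi'-\varphi^*\right]+\frac{C_N}{N^2}+O_\varphi\!\left(\max\left\{\eta_N,\frac{\|\varphi'''\|_\infty}{N^3}\right\}\right),
\]
which is the assertion.

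I expect the only real obstacle to be bookkeeping: each index shift and each appeal to~\eqref{symprounbia} produces an $O(\eta_N)$ (or $O(\eta_N/N^2)$) boundary term that must be tracked, and one must verify that the algebra of the second--order coefficients collapses to exactly the stated $C_N$. Conceptually nothing beyond Lemma~\ref{lem1} is needed; the symmetry~\eqref{symprounbia} is exactly what allows the odd parts to vanish without the periodicity hypotheses on $\varphi$ or $\varphi'$ used in Lemmata~\ref{lem2} and~\ref{lem3}.
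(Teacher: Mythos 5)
The paper leaves this proof to the reader, indicating only that it parallels Lemma~\ref{lem1} with the unbiased recursion~\eqref{symprob}; your argument fills in exactly that intended route — second-order Taylor expansion plus the symmetry $g_N(n)=g_N(-n)$ to discard odd parts up to $O(\eta_N)$ boundary terms — and it is correct. The bookkeeping checks out: in particular $\eta_{N-1}\le 2\eta_N$ (via $g_{N-1}(-1)=g_N(0)$, $g_{N-1}(-2)=2g_N(1)$ and $g_{N-1}(0)\le 2g_N(2)$) and the even-part identity $h_E=\tfrac{1}{N}\bigl(\tfrac{1}{2}\varphi'-\varphi^*\bigr)_E$ are both right, and the second-order coefficients do collapse to the stated $C_N$.
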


\begin{proof}[Proof of Lemma~\ref{refinedmomentestim}]
Applying Lemma~\ref{telunbia} to the polynomial map $\varphi_2$, one obtains that 
\begin{equation}\label{telepolymom2}
\mathcal{E}_N[\varphi_2]\;=\;\mathcal{E}_{N-1}[\varphi_2]-\frac{2}{N}\mathcal{E}_{N-1}[\varphi_2]+\frac{C_N}{N^2}+O\left(\eta_N\right),
\end{equation}
where, taking into account the definition of  the quantity $C_N$ and  the fact that $\varphi_2''$ is the constant function equal to 2, $$C_N\;=\; \mathcal{E}_N[\sigma]\qquad \textrm{with}\qquad \sigma(x)\;=\; \frac{(2-x)^2}{2}+\frac{(1+x)^2}{2}\cdotp$$From Point 2 in Theorem~\ref{mainthm}, one thus infers that 
\begin{equation}\label{limCN}
\lim_{N\rightarrow\infty} C_N\;=\; \sigma\left(\frac{1}{2}\right)\;=\; \frac{9}{4}\cdotp
\end{equation}
Also, by partial summation, given an integer $M\ge 4$,
\begin{align}
\sum_{N=3}^{M}\mathcal{E}_N[\varphi_2]\;=&\; (M+1)\cdot \mathcal{E}_M[\varphi_2]-3\cdot \mathcal{E}_3[\varphi_2]-\sum_{N=4}^{M}N\cdot\left(\mathcal{E}_N[\varphi_2]-\mathcal{E}_{N-1}[\varphi_2]\right)\nonumber \\
&\underset{\eqref{telepolymom2}}{=}\;  (M+1)\cdot \mathcal{E}_M[\varphi_2]-3\cdot  \mathcal{E}_3[\varphi_2]+2\cdot \left(\sum_{N=4}^M \mathcal{E}_{N-1}[\varphi_2]\right)\nonumber \\
&\qquad -\sum_{N=4}^M\left(\frac{C_N}{N}+O\left(N\cdot\eta_N\right)\right).\label{latenb}
\end{align}
Since Proposition~\ref{propdecayCLT} guarantees the existence of constants $K>0$ and $A>1$ such that $\eta_N\le K\cdot A^{-N}$ for all $N\ge 1$, one deduces from the norm inequality~\eqref{normjnp} that 
\begin{equation}\label{asympMlarg}
\sum_{N=3}^{M-1} \mathcal{E}_N[\varphi_2]+(M-2)\cdot \mathcal{E}_M[\varphi_2]\;=\; \sum_{N=4}^{M}\frac{C_N}{N} + O(1)\;\underset{\eqref{limCN}}{\asymp}\; \ln M,
\end{equation}
where the last relation holds for all $M\ge 4$ large enough. Upon summing up this identity, it implies with the help of an elementary manipulation of equation~\eqref{latenb} that for all $L\ge 4$ large enough, 
\begin{align*}
\left(L-2\right)\cdot \sum_{M=3}^{L} \mathcal{E}_M[\varphi_2]\;&=\; \sum_{M=3}^{L}\left(\sum_{N=3}^{M-1}\mathcal{E}_N[\varphi_2]+(M-2)\cdot \mathcal{E}_M[\varphi_2]\right)\\
&=\; \sum_{M=3}^{L}\left(\sum_{N=4}^{M}\frac{C_N}{N}+O(1)\right)\\
&\asymp\; L\cdot\ln L.
\end{align*}
This is easily seen to imply the estimate  for the second moment in the statement of the lemma.\\


\noindent As for the first moment, note that by the symmetry property~\eqref{symprounbia}, the random variables $X_N$ and $1-X_N-\chi_{\left\{X_N=0\right\}}$ (where $\chi_A$ denotes the characteristic function of the event $A$) share a common distribution. Upon taking their expectations, one obtains that $$\mathcal{E}_N[\varphi_1]\;=\; \frac{g_N(0)}{2}\cdotp$$

\noindent  Since from Proposition~\ref{propdecayCLT}, there exist constants $\theta>0$ and $A>1$ such that $g_N(0)\le \theta\cdot A^{-N}$ for all $N\ge 1$, this suffices to conclude the proof.
\end{proof}

\subsubsection{Verifying the Lyapunov Condition.}\label{secvari} 

In order to complete the proof of the Central Limit Theorem, consider a sequence of random variables $\left(X_N\right)_{N\ge 3}$ on the unit circle (identified with the interval $[0,1)$), each drawn according to the discrete law assigning the probability $g_N(n)$ to the event $X_N=n/N$ when $0\le n\le N-1$. Let then 
\begin{equation*}
\mathbb{E}_N(X_N)\;=\;\sum_{n=0}^{N-1}\frac{n}{N}\cdot g_N(n), \qquad \mathbb{V}_N(X_N)\;=\; \sum_{n=0}^{N-1}\left(\frac{n}{N}-\mathbb{E}_N(X_N)\right)^2\cdot g_N(n) 
\end{equation*}
and 
\begin{equation*}
\mathbb{W}_N(X_N)\;=\; \sum_{n=0}^{N-1}\left|\frac{n}{N}-\mathbb{E}_N(X_N)\right|^3\cdot g_N(n). 
\end{equation*}

\noindent When $L\ge 3$ is an integer, set furthermore 
\begin{equation*}
B_L\;=\; \sqrt{\sum_{N=3}^L \mathbb{V}_N(X_N)}.
\end{equation*}

\noindent A classical version of the Central Limit Theorem applicable to the case where random variables behave independently without being necessarily identically distributed is due to Lyapunov~\cite[p.362]{bil}. In the present case, it asserts that if the $X_N$'s are drawn independently from each other, then the convergence in law
\begin{equation}\label{convlyap}
\frac{1}{B_L}\cdot\sum_{N=1}^{L}\left(X_N-\mathbb{E}_N(X_N)\right)\;\overset{\mathcal{L}}{\underset{L\rightarrow\infty}{\longrightarrow}}\; \mathcal{N}(0,1)
\end{equation}  
holds provided that the \emph{Lyapunov condition}
\begin{equation}\label{condlyap}
\lim_{N\rightarrow\infty}\left(\frac{1}{B_L^{2+\kappa}}\cdot \sum_{N=3}^L \mathbb{E}_N\left[\left|X_N-\mathbb{E}_N(X_N)\right|^{2+\kappa}\right]\right)\;=\; 0
\end{equation} 
is met for some $\kappa>0$. The goal in this section is to show that this condition is indeed verified and then to derive Point 3 in Theorem~\ref{mainthm} from~\eqref{convlyap}.

%

\begin{proof}[Proof of Point 3 in Theorem~\ref{mainthm}.]
Note first that
\begin{align*}
\frac{1}{B_L}\cdot \sum_{N=1}^{L}\left(X_N-\frac{1}{2}\right)\;&=\; \frac{1}{B_L}\cdot \sum_{N=1}^{L}\left(X_N- \mathbb{E}_N(X_N)\right)+\frac{1}{B_L}\cdot \sum_{N=1}^{L}\left(\mathbb{E}_N(X_N)-\frac{1}{2}\right)\\
&=\; \frac{1}{B_L}\cdot \sum_{N=1}^{L}\left(X_N- \mathbb{E}_N(X_N)\right)-\frac{1}{B_L}\cdot \sum_{N=1}^{L}\mathcal{E}_N[\varphi_1].
\end{align*}
From this decomposition and from the classical Slutsky Theorem, the convergence in the statement of Point 3 in Theorem~\ref{mainthm} follows upon establishing these two points~:

\begin{itemize}
\item[(a)] the limit relation $$\lim_{L\rightarrow\infty}\;\frac{1}{B_L}\cdot\left( \sum_{N=1}^{L}\mathcal{E}_N[\varphi_1]\right)\;=\; 0$$holds;

\item[(b)] the Lyapunov condition~\eqref{condlyap} is verified when $\kappa=1$  (so that the convergence in law~\eqref{convlyap} is valid).
\end{itemize}

\noindent To this end, note first that Lemma~\ref{refinedmomentestim} implies that the  series $$ \sum_{N\ge 3} \mathcal{E}_N[\varphi_1]$$is absolutely convergent. This also holds for the series $$\sum_{N\ge 3} \mathbb{E}_N\left[\left|X_N-\mathbb{E}_N(X_N)\right|^{3}\right]\;=\; \sum_{N\ge 3}\mathbb{W}_N(X_N).$$Indeed, this claim immediately follows from Lemma~\ref{momentestim} upon noticing that 
\begin{align*}
\mathbb{W}_N(X_N)\;&\le\; \sum_{n=0}^{N-1}\left(\left|\frac{n}{N}-\frac{1}{2}\right|+\left|\frac{1}{2}-\mathbb{E}_N(X_N)\right|\right)^3\cdot g_N(n)\\
&=\; \sum_{n=0}^{N-1}\left(\left|\frac{n}{N}-\frac{1}{2}\right|+\left|\mathcal{E}_N[\varphi_1]\right|\right)^3\cdot g_N(n)\\
&\le\; \mathcal{E}_N[|\varphi_3|]+3\cdot \left(\mathcal{E}_N[|\varphi_2|]\right)^2\cdot \mathcal{E}_N[|\varphi_1|] + 4\cdot \left(\mathcal{E}_N[|\varphi_1|]\right)^3.
\end{align*}
To show that both Points (a) and (b) hold, it thus suffices that the sequence $\left(B_L\right)_{L\ge 1}$ should tend to infinity. This can be established in the quantitative way required by the statement of Theorem~\ref{mainthm}. To see this, note that when $L\ge 3$ is an integer, $$B_L^2\;=\; \sum_{N=3}^{L}\mathbb{V}_N(X_N)\;=\; \sum_{N=3}^{L}\mathcal{E}_N[\varphi_2]+3\cdot\left(\sum_{N=3}^{L}\mathcal{E}_N[\varphi_1]^2\right).$$In this decomposition, from Lemma~\ref{refinedmomentestim}, $$\sum_{N=3}^{L}\mathcal{E}_N[\varphi_2]\;\asymp\; \ln L\qquad \textrm{ and }\qquad \sup_{L\ge 3}\;\left(\sum_{N=3}^{L}\mathcal{E}_N[\varphi_1]^2\right)\;<\;\infty.$$This concludes the proof of Point 3 in Theorem~\ref{mainthm}.
\end{proof}

\section{Variations on the Theme of Randomisation in the Josephus Problem}\label{finalsec}

The main problem left open by Theorem~\ref{mainthm} is to determine the parameter of the Bernoulli limit distribution when $p\in (0, 1/3]\cup [2/3, 1)$; namely, with the notations of Claim (1) in the statement, to determine the probability of the event $\left\{X^{(p)}=1/2\right\}$. Denoting this parameter by $c(p)$, numerical simulations displayed in the appendix indicate that $c(p)=1$; in other words, this is saying that the sequence of random variables $\left(X_N^{(p)}\right)_{N\ge 3}$ converges in probability to the  constant $1/2$ regardless of the value of $p\in (0,1)$. Note that when $p\not\in (1/3, 2/3)$, the proof of Proposition~\ref{keypropopmidle}, which plays a crucial \emph{rôle} in establishing the second point in Theorem~\ref{mainthm}, is not valid anymore. The difficulty to determine the limit in this range can be gauged from the Central Limit Theorem stated in  Theorem~\ref{mainthm}~: even in the unbiased case $p=1/2$, the convergence in law towards the constant  $1/2$ is extremely slow (it is only of the order of the square root of a logarithm). \\

\noindent The following variant of the probabilistic elimination process provides more insight into  the subtlety involved in the determination of the existence of a limit measure.

\paragraph{\textbf{Alternative  Rule for the Probabilistic Elimination Process.}} \emph{Let there be $N$ participants enumerated from $0$ to $N-1$, standing on a unit circle with a regular spacing between them and labelled counterclockwise. The $0^{\textrm{th}}$  participant holds first the knife~: with probability $p$, he eliminates Participant 1 (standing on his right) and passes the knife onto the person to the right of the victim (namely, 2). Similarly, with probability $1-p$, he eliminates Participant $N-1$ (standing on his left) and passes  the knife onto the person to the left of the victim (namely, N-2). The next participant holding the knife then stabs the person still alive on his right-hand side with probability $p$ and the person still alive on his left--hand side with probability $1-p$, then passing the knife to the right and to the left, respectively.} \\


\noindent Compare with the first probabilistic elimination rule stated in the introduction~: the difference is that at each step, the outcome of the probabilistic choice is not about changing the \emph{direction} of the stabbing anymore but about choosing which of the persons standing on the left or on the right of the knife holder must be eliminated. \\


\noindent Denoting by $f_N(n,p)$ the probability that the $n^{th}$ participant should be the survivor in a process involving $N$ participants and evaluating the first argument $n$ modulo $N$, the recursion relation satisfied by the probability vector $\left(f_N(n,p)\right)_{0\le n\le N-1}$ can be shown to read

\begin{equation*}
f_N(n, p) = \begin{cases}
  p \cdot f_{N-1}(-1, p) + (1-p)\cdot  f_N(1, p)  &  \text{ if }  n \equiv 0 \pmod{N}  \\
  (1-p)\cdot f_{N-1}(2, p) & \text{ if } n \equiv 1 \pmod{N} \\
  p\cdot f_{N-1}(-2, p) & \text{ if } n\equiv -1\pmod{N} \\
  p \cdot f_{N-1}(n-2, p) + (1-p)\cdot  f_{N-1}(n+1, p)  &  \text{ otherwise }.
\end{cases}
\end{equation*}

\noindent The arguments developed in the previous sections enable one to show that the resulting sequence of random variables $\left(X_N^{(p)}\right)_{N\ge 3}$ (the notation with the previous rule is left unchanged for the sake of simplicity) converges in this case in probability to the constant $3p-1$ when $1/3<p<2/3$. Because the alternative rule of elimination under consideration coincides with the original one presented in the introduction in the unbiased case $p=1/2$, one also obtain a Central Limit Theorem in this situation. However, one does not obtain any conclusive statement about the limiting behavior of the sequence in the range $p\in (0, 1/3]\cup[2/3, 1)$. In fact, numerical simulations displayed in the appendix suggest that, in this range, the sequence does not admit any distributional limit anymore (in contrast with the first point in Theorem~\ref{mainthm}).\\

\noindent This second version of the elimination process can be seen as a particular case of an even more general probabilistic process involving two parameters $p, q \in [0,1]$~:

\paragraph{\textbf{General Rule for the Probabilistic Elimination Process.}} \emph{Let there be $N$ participants enumerated from $0$ to $N-1$, standing on a unit circle with a regular spacing between them and labelled counterclockwise.  Starting from the  $0^{\textrm{th}}$  participant, each one eliminates the participant on his right with probability p and the participant on his left with probability $1-p$; the knife is then passed onto the person on the right of the knife-holder with probability $q$ and onto the person onto the left of the knife-holder with probability $1-q$.} \\


\noindent Determining the distributional limit of the corresponding sequence of random variables as a function of the two parameters $p, q$ is an open problem. Some numerical simulations are provided in the appendix.


\bibliographystyle{unsrt}


\newpage

\section*{Appendix~: Numerical Simulations}\label{lctrootpole} 

\addcontentsline{toc}{section}{\protect\numberline{}Appendix~: Numerical Simulations}\label{lctrootpole} 
 
The three sets of graphs below represent numerical simulations in each of the three variants of the probabilistic Josephus problem considered so far, namely the first variant (referred to as R1) following the rule stated in the introduction, the second one (R2) with the alternative rule introduced in the final section  and the third one (R3) with the general rule depending on two parameters $p$ and $q$. The graphs have all been produced with simulations comprising $N=2000$ participants. They show the probability each of the 2000 players has of being the survivor. In particular, when the elimination process is deterministic (that is, when the parameters $p$ and $q$ take the extremal values 0 or 1), the survivor is uniquely determined by the number of participants. The graphs display in such cases a fixed value achieved with probability 1.

\begin{itemize}

\item \textbf{Numerical simulations for the probabilistic rule of elimination R1  with $N=2000$ persons.} The parameter $p$ takes successively the values 0, 0.2, 0.4, 0.6, 0.8 and 1. All the non-deterministic cases (i.e.~when $p\neq 0,1$) show the convergence in probability to the constant $1/2$.

\begin{figure}[ht!]
\begin{center}
\includegraphics[scale=0.4]{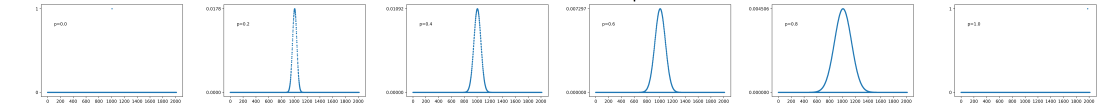}
\label{figure:}
\end{center}
\end{figure} 

\item \textbf{Numerical simulations for the alternative probabilistic rule of elimination R2  with $N=2000$ persons.} The parameter $p$ takes successively the values 0, 0.1, 0.2, 0.3, 0.4 and 0.5 (restricting the parameter $p$ to the interval $[0, 1/2]$ is without loss of generality~: the elimination process is indeed left unchanged upon swapping $p$ with $1-p$ and, correspondingly, the right and left moves). In the non-deterministic cases (i.e.~when $p\neq 0,1$), the graphs indicate a convergence of the process in probability to the constant $3p-1$ in the middle range $p\in (1/3, 2/3)$ and a divergence outside this range.

\begin{figure}[ht!]
\begin{center}
\includegraphics[scale=0.4]{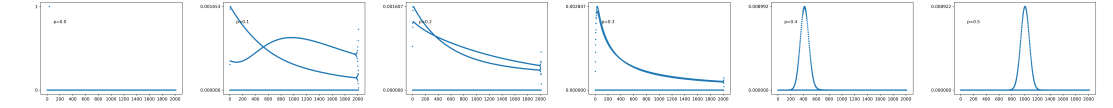}
\label{figure:}
\end{center}
\end{figure} 

\item \textbf{Numerical simulations for the general probabilistic rule of elimination R3 depending on two parameters $(p, q)$ with $N=2000$ players.} The distributional limit of the process displays here a more subtle dependency on the parameters $p$ and $q$ which is left to conjecture.

\begin{figure}[ht!]
\begin{center}
\includegraphics[scale=0.4]{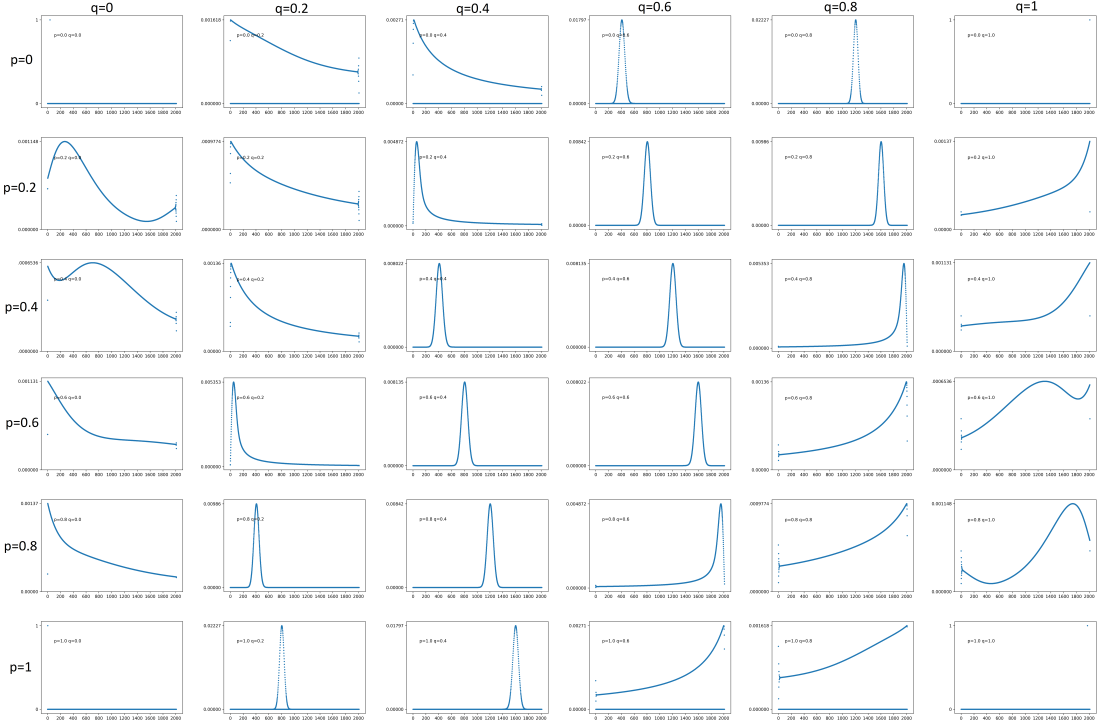}
\label{figure:}
\end{center}
\end{figure} 

\end{itemize}

\end{document}